\date{}
\definecolor{sah}{rgb}{0.66,0.33, 0.04}
\definecolor{adel4}{cmyk}{1,0,0,0}
\definecolor{adel3}{rgb}{0.66,0.33, 0.04}
\definecolor{adel1}{cmyk}{0,0.20,1,0}
\definecolor{adel2}{cmyk}{0,0.40,1,0.30}
\definecolor{adel0}{rgb}{0.99,0.60, 0.30}
\definecolor{trut}{rgb}{0.99,0.80, 0.00}
\definecolor{trus}{rgb}{0.00, 0.50, 0.00}
 \definecolor{trust}{rgb}{0.99, 0.99, 0.80}
\definecolor{MaCouleur}{rgb}{0,0.9,0.3}
\def\virgp{\raise 2pt\hbox{,}}
\def\({\left(}
\def\){\right)}
\def\<{\langle}
\def\>{\rangle}
\theoremstyle{plain}
\newtheorem{Theo}{Theorem}
 \newtheorem{Lemm}{Lemma}
\newtheorem{Prop}{Proposition}
 \newtheorem{Defin}{ Definition}
 \newtheorem{rema}{Remark}
\newcommand{\av}{{\rm Avg}}
\newcommand{\R}{{\mathbb R}}
\newcommand{\er}{{\mathbb R}}
\newcommand{\lb}{{{\rm LBMO}}}
\newcommand{\co}{{\rm o}}
  \title[]{On the global well-posedness of the 2D Euler equations for a large class of Yudovich type data}
\author[F. Bernicot]{Fr\'ed\'eric Bernicot}
\address{CNRS - Universit\'e de Nantes \\ Laboratoire de Math\`ematiques Jean Leray \\ 2, Rue de la Houssini\`ere F-44322 Nantes Cedex 03, France}
 \email{frederic.bernicot@univ-nantes.fr}
\author[S. Keraani]{Sahbi Keraani}
\address{UFR de math\'ematiques \\ Universit\'e de Lille 1\\
 59655 Villeneuve d'Ascq  Cedex\\   France}
\email{sahbi.keraani@univ-lille1.fr}
\thanks{The two authors are supported by the ANR under the project AFoMEN no. 2011-JS01-001-01.}
\date{\today}
\subjclass[2000]{76B03 ; 35Q35}
\keywords{ 2D incompressible Euler equations, Global well-posedness, {\rm BMO}-type space}
\begin{document}

\begin{abstract}
The study of the 2D Euler equation with non Lipschitzian velocity was initiated by Yudovich in  \cite{Y1} where a result of global well-posedness for  essentially  bounded vorticity is proved.  A lot of  works have been since  dedicated to the extension of this result to more general spaces. To the best of our knowledge all these contributions lack the proof of   at least  one of the  following three fundamental properties:  global existence, uniqueness and regularity persistence. In this paper we introduce a Banach space containing  unbounded functions for which all these properties are shown to be satisfied. 
\end{abstract}

\maketitle

\section{Introduction}
We consider
the Euler system related to an incompressible inviscid fluid with constant
density, namely
\begin{equation}
\label{E}
 \left\{ 
\begin{array}{ll} 
\partial_t u+u\cdot\nabla u+\nabla P=0,\qquad x\in \mathbb R^d, t>0, \\
\nabla.u=0,\\
u_{\mid t=0}= u_{0}.
\end{array} \right.    
     \end{equation}
 Here, the vector field   \mbox{$u=(u_2,u_1,...,u_d)$}  is a function of  \mbox{$(t,x)\in \mathbb R_+\times\mathbb R^d$}  denoting the velocity of the fluid and  the scalar function  
\mbox{$P$}   stands for  the   pressure.
The second equation of the system  \mbox{$\nabla.u=0$}  is the 
  condition of incompressibility. 
Mathematically, it guarantees the preservation of  Lebesgue  measure by the particle-trajectory mapping (the classical flow associated to the 
velocity vector fields).
It is worthy of noting that the pressure can be  recovered from the velocity  via an explicit Calder\'on-Zygmund type operator (see \cite{Ch1}  for instance).

The question of  local  well-posedness  of \eqref{E} with smooth data was resolved by many authors in different spaces (see for instance \cite{Ch1,Maj}). In this context,  the vorticity  \mbox{$\omega={\rm curl}\,  u$}  plays a fundamental role. In fact, the well-known BKM criterion  \cite{Beale}  ensures that the development of finite time singularities for these solutions is related to the blow-up  of the  \mbox{$L^\infty$}  norm of the vorticity near the maximal time existence.   A direct consequence of this result is the global well-posedness of the two-dimensional Euler solutions with smooth initial data, since the vorticity  satisfies
the transport equation
\begin{equation}
\label{tourbillon}
\partial_t\omega+(u \cdot \nabla)\omega=0,
\end{equation}
and then all  its  \mbox{$L^p$}  norms are conserved.    

Another class of solutions requiring lower regularity on the velocity can be considered: the weak solutions (see for instance \cite[Chap 4]{lions1}). They solve a
weak form of the equation in the distribution sense, placing the equations in large
spaces and using duality. The  divergence form of Euler equations  allows to put all the derivative on  the test functions and so to obtain
$$
\int_0^\infty\int_{\R^d}(\partial_t\varphi+(u \cdot \nabla)\varphi).u\,dxdt+\int_{\R^d}\varphi(0,x)u_0(x)\,dx=0,
$$
for all  \mbox{$\varphi\in C^\infty_0(\R_+\times\R^d, \R^d)$}  with  \mbox{$  \nabla.\varphi=0$}. In the two dimensional space and when the regularity is sufficient to give a sense to Biot-Savart law, then one can consider an alternative weak formulation:  the vorticity-stream weak formulation. It  consists in resolving the weak form of \eqref{tourbillon} supplemented with the Biot-Savart law:
\begin{equation}
\label{bs}
u=K\ast\omega,\quad \hbox{with}\quad K(x)=\frac{x^\perp}{2\pi|x|^2}.
\end{equation}
In this case,   \mbox{$(v,\omega)$}  is a weak solution to the vorticity-stream formulation of the 2D Euler equation with initial data  \mbox{$\omega_0$}  if \eqref{bs} is satisfied and 
$$
\int_0^\infty\int_{\R^2}(\partial_t\varphi+u.\nabla\varphi)\omega(t,x) dxdt+\int_{\R^2}\varphi(0,x)\omega_0(x)dx=0,
$$
for all  \mbox{$\varphi\in C^\infty_0(\R_+\times\R^2,\R)$}.

The questions of existence/uniqueness of weak solutions  have been extensively studied and  a detailed
account can be found in the books \cite{Ch1, Maj, lions1}. We emphasize that, unlike the fixed-point argument, the compactness method does not guarantee the uniqueness of the  solutions and then  the two issues (existence/uniqueness) are usually  dealt with separately.  These questions have been originally addressed by Yudovich in \cite{Y1} where the existence and uniqueness of weak solution to 2D Euler systems (in bounded domain) are proved under the assumptions:   \mbox{$u_0\in L^2$}  and  \mbox{$\omega_0\in L^\infty$}.  
Serfati  \cite{Ser} proved the uniqueness and existence of a solution with initial velocity and vorticity which are only bounded (without any integrability condition). There is an  extensive literature on the existence of weak solution to Euler system, possibly without uniqueness, with unbounded vorticity. DiPerna-Majda \cite{DM} proved the existence of weak solution for  \mbox{$\omega_0\in L^1\cap L^p$}  with  \mbox{$2<p<\infty$}. The  \mbox{$L^1$}  assumption in  DiPerna-Majda's paper has been removed by Giga-Miyakawa-Osada \cite{GMO}.
Chae \cite{Ch} proved an existence result for  \mbox{$\omega_0$}  in  \mbox{$L\ln^+L$}  with compact support. 
More recently, Taniuchi  \cite{tan}  has proved the global existence  (possibly without uniqueness nor regularity persistence) for  \mbox{$(u_0,\omega_0)\in L^\infty\times {\rm BMO}$}. The papers \cite{Vishik1} and \cite{Y2} are concerned with the questions of existence  and uniqueness of weak solutions for larger classes of vorticity. Both have intersections with the present paper and we will come back to them at  the end of this section (Remark \ref{r2}). A framework for measure-valued
solutions can be found in  \cite{De} and \cite{FLX} (see also \cite{Ger} for more detailed references).

Roughly speaking, the proof of uniqueness of weak solutions requires a uniform, in time, bound of the   \mbox{$\log$}-Lipschitzian norm of the velocity. This ``almost" Lipschitzian regularity of the velocity is enough to assure the existence and uniqueness of the associated flow (and then of  the solution). Initial conditions of the type  \mbox{$\omega_0 \in L^\infty(\R^2)$}  ( or \mbox{$\omega_0 \in{\rm BMO}, B_{\infty,\infty}^0,...$})  guarantee the  \mbox{$\log$}-Lipschitzian regularity of  \mbox{$u_0$}. However, the persistence of  such regularity when time varies requires  an {\it a priori} bound of these quantities for the approximate-solution sequences. This is  trivially done (via the conservation law)  in the  \mbox{$L^\infty$}  case  but not at all clear for the other cases. The main issue in this context is the action of Lebesgue measure preserving homeomorphisms on these spaces. In fact, it is easy to prove that all these spaces are invariant under the action of such class of homeomorphisms, but the optimal form of the constants (depending on the homeomorphisms and important for the application) are not easy to find. It is worth of mentioning, in this context, that the  proof  by Vishik \cite{Vishik2} of the global existence for \eqref{E} in  the borderline Besov spaces  is based on a  refined result on  the action of Lebesgue  measure preserving homeomorphisms on   \mbox{$B_{\infty,1}^0$}. 

 In this paper we place ourselves in some Banach space which is strictly imbricated between  \mbox{$L^\infty$}  and  \mbox{${\rm BMO}$}. Although located beyond the reach of the conservation laws of the vorticity this space has many nice properties (namely with respect of the action of the group of Lebesgue measure preserving homeomorphisms) allowing to derive the above-mentioned {\it a priori} estimates for the approximate-solution sequences. 
 
Before going any further, let us introduce this functional space (details about  \mbox{${\rm BMO}$}  spaces can be found in the book of Grafakos \cite{GR}).
\begin{Defin} For a complex-valued locally integrable function on  \mbox{$\R^2$}, set
$$
\|f\|_{\lb}:=\|f\|_{{\rm BMO}}+\sup_{B_1,B_2}\frac{|\av_{B_2}(f)-\av_{B_1}(f)|}{1+\ln\big(\frac{ 1-\ln r_2  }{1-\ln r_1 }\big)},
$$
where the supremum is taken aver all pairs of balls  \mbox{$B_2=B(x_2,r_2)$}   and  \mbox{$B_1=B(x_1,r_1)$}  in  \mbox{$\R^2$}   with  \mbox{$0<r_1\leq 1$}  and  \mbox{$2B_2\subset B_1$}.
Here and subsequently, we denote 
$$
\av_{D}(g):=\frac1{|D|}\int_Dg(x)dx,
$$  
for every  \mbox{$g\in L^1_{\text{loc}}$}  and every non negligible set  \mbox{$D\subset \R^2$}. 
Also, for a ball  \mbox{$B$}  and  \mbox{$\lambda>0$},  \mbox{$\lambda B$}  denotes the ball that is concentric with  \mbox{$B$}  and whose radius is  \mbox{$\lambda$}  times the radius of  \mbox{$B$}.
\end{Defin}
We recall  that 
$$
\|f\|_{{\rm BMO}}:=\sup_{{\rm ball}\,\, B}\av_{B}|f-\av_{B}(f)|.
$$
It is worth of  noting that if  \mbox{$B_2$}   and  \mbox{$B_1$}  are  two balls such that  \mbox{$2B_2\subset B_1$}  then\footnote{  Throughout this paper  the notation   \mbox{$A \lesssim  B$}  means that there exists a positive  universal constant  \mbox{$C$}  such that  \mbox{$A\le CB$}. }
\begin{equation}
\label{22}
{|\av_{B_2}(f)-\av_{B_1}(f)|} \lesssim {\ln(1+\frac{r_1}{r_2})} \|f\|_{{\rm BMO}}.
\end{equation}
In the definition of  \mbox{$\lb$}  we replace the   term  \mbox{$\ln(1+\frac{r_1}{r_2})$}  by  \mbox{$\ln\big(\frac{ 1-\ln r_2  }{1-\ln r_1 }\big)$}, which is smaller.  This  puts more constraints on the functions belonging to this space\footnote{ Here, we identify all  functions whose difference is a constant. In section 2, we will prove that  \mbox{$\lb$}   is complete and strictly imbricated between  \mbox{${\rm BMO}$}  and  \mbox{$L^\infty$}.  {The "$L$" in  \mbox{$\lb$}  stands for "logarithmic".}
} and allows us to derive some crucial property on the composition of them with Lebesgue  measure preserving  homeomorphisms,  which is the heart of our analysis.

 The following statement  is the  main result of the paper.
\begin{Theo} 
\label{main}Assume  \mbox{$\omega_0\in L^p\cap \lb$}  with  \mbox{$p\in ]1,2[$}. Then there exists a unique global weak solution  \mbox{$(v,\omega)$}  to the vorticity-stream formulation of the 2D Euler  equation.  Besides, there exists a constant  \mbox{$C_0$}   depending only  on the   \mbox{$L^p\cap \lb$}-norm of  \mbox{$\omega_0$}  such that
\begin{equation}
\label{bound}
\|\omega(t)\|_{  L^p\cap \lb }\leq C_0\exp({C_0t}),\qquad\forall\,  t\in\R_{+}.
\end{equation}
\end{Theo}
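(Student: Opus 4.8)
The plan is to run the classical Yudovich program adapted to the space $\lb$: regularize the data, derive uniform a priori bounds in $L^p\cap\lb$, pass to the limit for existence, and handle uniqueness separately by an Osgood argument. I would first mollify, setting $\omega_0^n=\rho_n\ast\omega_0$ for a standard approximate identity $\rho_n$, so that $\omega_0^n$ is smooth, $\omega_0^n\to\omega_0$ in $L^p$, and $\|\omega_0^n\|_{L^p\cap\lb}\lesssim\|\omega_0\|_{L^p\cap\lb}$ uniformly in $n$ (the $\lb$ bound for a convolution follows from translation invariance of the seminorm together with Jensen's inequality applied inside each average). Since $\omega_0^n$ is smooth and lies in every $L^q$, the classical two dimensional theory furnishes a unique global smooth solution $(u^n,\omega^n)$, whose vorticity is transported along the measure preserving flow $\Phi^n_t$ of $u^n$: one has $\omega^n(t)=\omega_0^n\circ(\Phi^n_t)^{-1}$ by \eqref{tourbillon}. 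Because $\Phi^n_t$ preserves Lebesgue measure, every $L^q$ norm is conserved, and in particular $\|\omega^n(t)\|_{L^p}=\|\omega_0^n\|_{L^p}$ for all $t$; the $L^p$ half of \eqref{bound} is therefore immediate.

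All the difficulty is concentrated in the uniform control of the $\lb$ norm, and the mechanism is the interaction between the regularity of $\Phi^n_t$ and the action of homeomorphisms on $\lb$. Writing $L(t):=\|\omega^n(t)\|_{L^p\cap\lb}$, the embedding $\lb\subset{\rm BMO}$ and the Calder\'on--Zygmund bound for $\nabla u^n=\nabla K\ast\omega^n$ (from \eqref{bs}) show that $u^n(t)$ is log-Lipschitz with constant $\lesssim L(t)$. A log-Lipschitz field generates a flow whose modulus of continuity degrades in the standard way: with $\beta(t):=\exp\big(-C\int_0^tL(s)\,ds\big)$, one has $|\Phi^n_t(x)-\Phi^n_t(y)|\lesssim|x-y|^{\beta(t)}$ for $|x-y|$ small, together with the reverse bound for $(\Phi^n_t)^{-1}$. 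Thus the flow maps a ball of radius $r$ into a region trapped between balls of radii $\sim r^{1/\beta(t)}$ and $\sim r^{\beta(t)}$, and the entire quantitative loss is measured by $|\ln\beta(t)|\lesssim\int_0^tL(s)\,ds$.

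Feeding this into the $\lb$ seminorm is the crux, and the step I expect to be the main obstacle. Here the doubly logarithmic weight $1+\ln\!\big(\tfrac{1-\ln r_2}{1-\ln r_1}\big)$ is precisely engineered for the job. Since $\Phi^n_t$ is measure preserving, for nested balls $2B_2\subset B_1$ the averages $\av_{B_i}\omega^n(t)$ coincide with the averages of $\omega_0^n$ over the distorted images $(\Phi^n_t)^{-1}(B_i)$; the flow turns a scale $r_i$ into a scale of order $r_i^{\beta(t)}$, so that $1-\ln r_i$ becomes $1-\beta(t)\ln r_i$ and the weight is altered only by an additive term controlled by $|\ln\beta(t)|$. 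To make this rigorous I would invoke the action-of-homeomorphisms lemma announced in the introduction, covering each distorted image by a controlled number of genuine balls and paying for the eccentricity with the ${\rm BMO}$ norm of $\omega_0^n$; the outcome is an estimate of the form $\|\omega^n(t)\|_{\lb}\lesssim\|\omega_0^n\|_{\lb}\big(1+\int_0^tL(s)\,ds\big)$. Combined with the conserved $L^p$ norm this closes into $L(t)\le L(0)\big(1+C\int_0^tL(s)\,ds\big)$, and Gronwall's lemma applied to $G(t)=1+C\int_0^tL(s)\,ds$ (for which $G'\le CL(0)\,G$) yields $L(t)\le C_0e^{C_0t}$, which is exactly \eqref{bound}. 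The point is that the distortion enters linearly in $\int_0^tL$ rather than through an unbounded logarithm, so one gets clean exponential growth instead of the double-exponential loss that a naive ${\rm BMO}$ computation would produce.

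With the uniform bound \eqref{bound} in hand, existence follows by compactness: the $u^n$ are uniformly log-Lipschitz, hence locally equicontinuous in space and (via the equation) in time, while $\omega^n$ is bounded in $L^\infty_t(L^p\cap\lb)$; Arzel\`a--Ascoli for $u^n$ and weak-$\ast$ extraction for $\omega^n$ produce a limit $(u,\omega)$ for which the strong convergence of $u^n$ against the weak convergence of $\omega^n$ passes the nonlinearity $u^n\omega^n$ to the limit, so that $(u,\omega)$ solves \eqref{bs} and the weak vorticity formulation; the $\lb$ bound survives by lower semicontinuity of the seminorm. Uniqueness I would establish \`a la Yudovich: the restriction $p\in\,]1,2[$ makes $u=K\ast\omega$ locally square integrable and the energy manipulations licit, and, both competing velocities being log-Lipschitz uniformly on compact time intervals by \eqref{bound}, the squared $L^2$ distance of the two solutions obeys a differential inequality $\tfrac{d}{dt}\,y\lesssim\mu(y)$ governed by the log-Lipschitz modulus $\mu(r)\sim r(1-\ln r)$, which satisfies the Osgood condition $\int_{0^+}\frac{dr}{\mu(r)}=+\infty$; hence $y(0)=0$ forces $y\equiv0$.
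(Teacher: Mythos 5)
Your proposal is correct and follows essentially the same route as the paper: mollify the data, apply the classical Yudovich theory to the regularized problems, control $\|\omega^n(t)\|_{\lb\cap L^p}$ by combining the composition estimate for measure-preserving homeomorphisms (the paper's Theorem~\ref{decom}, which you correctly identify as the crux and whose double-logarithmic mechanism you describe accurately) with the log-Lipschitz bound on the flow and Gronwall, then pass to the limit by compactness and conclude uniqueness from the uniform log-Lipschitz regularity of the velocity. The only (cosmetic) differences are that the paper extracts convergence through the flow maps $\psi_n$ via Arzel\`a--Ascoli rather than through weak-$\ast$ limits of $\omega^n$, and delegates uniqueness to Vishik and to Theorem~7.17 of \cite{bah-ch-dan} instead of running the Osgood energy argument directly.
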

Some remarks are in order.
\begin{rema} {\rm The proof  gives more, namely   \mbox{$
\omega\in \mathcal C(\R_+, L^q)$}  for all   \mbox{$p\leq q<\infty$}. 
Combined with the Biot-Savart law\footnote{If  \mbox{$\omega_0\in L^p$}  with  \mbox{$p\in ]1,2[$}  then a classical Hardy-Littlewood-Sobolev inequality gives   \mbox{$u\in L^q$}  with  \mbox{$\frac1q=\frac1p-\frac12$}. } this yields
\mbox{$
u\in \mathcal C(\R_+, W^{1,r})\cap \mathcal C(\R_+, L^\infty)$}  for all  \mbox{$\frac{2p}{2-p}\leq r<\infty$}.
}
\end{rema}

\begin{rema}
\label{r2}
{\rm The essential point of Theorem \ref{main} is that it  provides an initial  space which is strictly larger than  \mbox{$L^p\cap L^\infty$}  (it contains unbounded elements) which is a space of existence, uniqueness and persistence of regularity at once. We emphasize that the bound \eqref{bound} is crucial since it implies that  \mbox{$u$}  is, uniformly in time,  \mbox{$\log$}-Lipschitzian which is the main ingredient for the uniqueness. Once this bound established the uniqueness follows from the work by Vishik \cite{Vishik1}. In this paper Vishik also gave a result of existence (possibly without regularity persistence) in some large space characterized by  growth of the partial sum of the  \mbox{$L^\infty$}-norm of its dyadic blocs. 
 We should also mention the result of uniqueness by Yudovich \cite{Y2} which establish uniqueness (for bounded domain) for some space which contains unbounded functions. Note also that the example of unbounded function, given in \cite{Y2}, belongs actually to the space  \mbox{$\lb$}   (see Proposition \ref{pro3} below). Our approach is different from those in \cite{Vishik1} and \cite{Y2} and  uses a classical harmonic analysis ``\`a la stein" without making appeal to the Fourier analysis (para-differential calculus).  }
 \end{rema}
 \begin{rema} {\rm The main ingredient of the proof of \eqref{bound} is a logarithmic estimate in the space  \mbox{$L^p\cap \lb$}   (see Theorem  \ref{decom} below). It would be desirable to prove this result for  \mbox{${\rm BMO}$}  instead of  \mbox{$\lb$}. 
 Unfortunately, as it is proved in \cite{BK}, the  corresponding estimate with  \mbox{${\rm BMO}$}  is  optimal (with the bi-Lipschitzian norm instead of the  \mbox{$\log$}-Lipschitzian norm of the homeomorphism) and so the argument presented here seem to be not extendable to  \mbox{${\rm BMO}$}. }
\end{rema}

The remainder of this  paper is organized as follows. In the two next sections we introduce some functional spaces and prove a logarithmic estimate  which is crucial to the proof of Theorem \ref{main}.  The fourth and last  section is dedicated to the proof of Theorem \ref{main}.

     \section{Functional spaces}
    
   Let us first recall that the set of \mbox{$\log$}-Lipschitzian vector fields on $\R^2$ , denoted by $LL$, is the
set of bounded vector fields $v$ such that
    $$
   \|v\|_{LL}:=\sup_{x\neq y}\frac{|v(x)-v(y)|}{|x-y|\big(1+\big|\ln|x-y|\big|\big)}<\infty.
    $$
   The importance of this notion lies in the fact that if the vorticity belong to the Yudovich type space (say  \mbox{$L^1\cap L^\infty$}) then the velocity is no longer Lipschitzian, but  \mbox{$\log$}-Lipschitzian. In this case we still have existence and uniqueness of flow but a 
loss of regularity  may occur.  Actually, this   loss  of regularity is unavoidable and its degree is 
 related to the norm  \mbox{$L^1_t(LL)$}  of the velocity. The reader is referred to section 3.3 in  \cite{bah-ch-dan} for more details about this issue.
 
   To capture this behavior, and 
  overcome the difficulty generated by it, we introduce the following definition. 
  \begin{Defin} For every homeomorphism  \mbox{$\psi$}, we set
    $$
   \|\psi\|_*:=\sup_{x\neq y}\Phi\big(|\psi(x)-\psi(y)|, |x-y|\big),
    $$
   where  \mbox{$\Phi$}  is defined on  \mbox{$]0,+\infty[\times]0,+\infty[$}  by
   \begin{equation*}
\Phi(r,s)=\left\{ 
\begin{array}{ll} 
\max\{\frac{1+|\ln(s)| }{ 1+|\ln r | };\frac{ 1+|\ln r | }{1+|\ln(s)| }\},\quad {\rm if}\quad (1-s)(1-r)\geq 0, \\
{(1+|\ln s|) }{ (1+|\ln r|) },\quad {\rm if}\quad  (1-s)(1-r)\leq 0.
\end{array} \right.    
     \end{equation*}
     \end{Defin}
Since  \mbox{$\Phi$}  is symmetric then  \mbox{$\|\psi\|_*=\|\psi^{-1}\|_*\geq 1$}.  It is clear also that every homeomorphism  \mbox{$\psi$}  satisfying
$$
\frac{1}C|x-y|^\alpha\leq |\psi(x)-\psi(y)|\leq C|x-y|^\beta,
$$
for some  \mbox{$\alpha,\beta,C>0$}  has its  \mbox{$\|\psi\|_*$}   finite (see Proposition \ref{p1} for a reciprocal property). 

The definition above is motivated by this proposition (and by Theorem \ref{decom} below as well).
\begin{Prop}  \label{prop} Let  \mbox{$u$}  be a smooth divergence-free vector fields and  \mbox{$\psi$}  be its  flow:
$$
\partial_t{\psi}(t,x)=u(t,\psi(t,x)),\qquad {\psi}(0,x)=x.
$$
Then, for every  \mbox{$t\geq 0$}
$$
\|\psi(t,\cdot)\|_*\leq \exp(\int_0^t\|u(\tau)\|_{LL}d\tau).
$$
\end{Prop}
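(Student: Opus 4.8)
The plan is to follow a pair of particles along the flow and to control the logarithm of the distance separating them. Fix $x\neq y$ and set $X(t)=\psi(t,x)$, $Y(t)=\psi(t,y)$ and $z(t)=|X(t)-Y(t)|$. By uniqueness for the ODE defining the flow, two trajectories issued from distinct points never meet, so $z(t)>0$ for all $t\ge 0$ and $z$ is smooth and positive. Differentiating, using $\partial_t\psi=u(t,\psi)$ and the very definition of the $LL$-norm, I would first record the differential inequality
\begin{equation*}
|z'(t)|\le |u(t,X(t))-u(t,Y(t))|\le \|u(t)\|_{LL}\,z(t)\,\big(1+|\ln z(t)|\big).
\end{equation*}
If $\int_0^t\|u(\tau)\|_{LL}\,d\tau=+\infty$ the statement is trivial, so I assume this quantity finite.

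The second step is to linearise this inequality by passing to $\gamma(t):=1+|\ln z(t)|$, a positive, locally Lipschitz function which is smooth except possibly where $z(t)=1$. At every point of differentiability one has $|\gamma'(t)|=|z'(t)|/z(t)\le \|u(t)\|_{LL}\,\gamma(t)$, so this inequality holds almost everywhere. Gronwall's lemma, used in both directions on any subinterval $[a,b]\subset[0,t]$, then yields
\begin{equation*}
\exp\Big(-\int_a^b\|u(\tau)\|_{LL}\,d\tau\Big)\le \frac{\gamma(b)}{\gamma(a)}\le \exp\Big(\int_a^b\|u(\tau)\|_{LL}\,d\tau\Big).
\end{equation*}

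Finally I would match these bounds against the two branches of $\Phi$, writing $r=z(t)=|\psi(t,x)-\psi(t,y)|$ and $s=z(0)=|x-y|$, so that $\gamma(t)=1+|\ln r|$ and $\gamma(0)=1+|\ln s|$. When $(1-s)(1-r)\ge 0$, i.e. $r$ and $s$ lie on the same side of $1$, the two-sided estimate with $[a,b]=[0,t]$ bounds both $\gamma(t)/\gamma(0)$ and $\gamma(0)/\gamma(t)$, hence their maximum $\Phi(r,s)$, by $\exp(\int_0^t\|u(\tau)\|_{LL}\,d\tau)$. When $(1-s)(1-r)\le 0$, the distance must cross the value $1$, so by the intermediate value theorem there is $t_0\in[0,t]$ with $z(t_0)=1$, that is $\gamma(t_0)=1$; applying the estimate separately on $[0,t_0]$ and on $[t_0,t]$ gives $\gamma(0)\le \exp(\int_0^{t_0}\|u(\tau)\|_{LL}\,d\tau)$ and $\gamma(t)\le \exp(\int_{t_0}^t\|u(\tau)\|_{LL}\,d\tau)$, whose product is exactly $\Phi(r,s)=\gamma(0)\gamma(t)\le \exp(\int_0^t\|u(\tau)\|_{LL}\,d\tau)$. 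Taking the supremum over all $x\neq y$ gives the claimed bound on $\|\psi(t,\cdot)\|_*$.

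The only genuinely delicate point is the passage of $z$ through the value $1$, where $\gamma$ fails to be differentiable: this is precisely what forces the two-case definition of $\Phi$, and it is handled by the almost-everywhere version of Gronwall's lemma together with the splitting of the time interval at the crossing time $t_0$. Everything else is routine once the differential inequality for $\gamma$ is in place.
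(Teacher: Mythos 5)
Your proof is correct and follows essentially the same route as the paper: both start from the differential inequality $|\dot z(t)|\le\|u(t)\|_{LL}\,z(t)\,(1+|\ln z(t)|)$ for the inter-particle distance and then integrate it over $[0,t]$. The paper merely packages your two-sided Gronwall bound and the splitting at the crossing time $t_0$ into a single step, by integrating the explicit primitive $g$ of $\tau\mapsto \big(\tau(1+|\ln\tau|)\big)^{-1}$, for which one checks that $\Phi(r,s)=\exp\big(|g(r)-g(s)|\big)$ in every case.
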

\begin{proof} It is well-known that for every  \mbox{$t\geq 0$}  the mapping  \mbox{$  x\mapsto \psi(t,x)$}  is a   Lebesgue  measure preserving homeomorphism  (see \cite{Ch1} for instance). We fix  \mbox{$t\geq 0$}  and  \mbox{$x\neq y$}  and  set 
$$
z(t)=|\psi(t,x)-\psi(t,y)|.
$$
Clearly the function  \mbox{$Z$}  is strictly positive and satisfies 
$$
|\dot{z}(t)|\leq  \|u(t)\|_{LL}(1+|\ln z(t)|)z(t).
$$  
Accordingly, we infer
$$
|g(z(t))-g(z(0))|\leq \int_0^t\|u(\tau)\|_{LL}d\tau
$$
where 
\begin{equation*}
g(\tau):=\left\{ 
\begin{array}{ll} 
\ln(1+\ln(\tau)),\quad {\rm if}\quad \tau\geq 1, \\
-\ln(1-\ln(\tau)),\quad {\rm if}\quad 0<\tau<1.
\end{array} \right.    
     \end{equation*}
     This yields in particular that
      \mbox{$
     \frac{\exp(g(z(t)))}{\exp(g(z(0)))}$}   and    \mbox{$\frac{\exp(g(z(0)))}{\exp(g(z(t)))}$}  are  both controlled by   \mbox{$\exp(\int_0^t\|u(\tau)\|_{LL}d\tau)$}  leading to 
  $$
 \Phi(z(t), z(0))\leq  \exp(\int_0^t\|u(\tau)\|_{LL}d\tau),
  $$
 as claimed.   
\end{proof}
The following proposition follows directly from the definition by a  straightforward computation.
\begin{Prop} 
\label{p1}
Let  \mbox{$\psi$}  be a homeomorphism with  \mbox{$\|\psi\|_{*}<\infty$}. Then for every  \mbox{$(x,y)\in \mathbb R^2\times\mathbb R^2$}  one has
\begin{enumerate}
\item If  \mbox{$|x-y|\geq 1$}  and  \mbox{$|\psi(x)-\psi(y)|\geq 1$}
$$
e^{-1}|x-y|^{\frac{1}{\|\psi\|_*}}\leq |\psi(x)-\psi(y)|\leq  e^{\|\psi\|_*}|x-y|^{\|\psi\|_*}.
$$
\item  If  \mbox{$|x-y|\leq 1$}  and  \mbox{$|\psi(x)-\psi(y)|\leq 1$}
$$
e^{-\|\psi\|_*}|x-y|^{{\|\psi\|_*}}\leq |\psi(x)-\psi(y)|\leq e |x-y|^{\frac{1}{\|\psi\|_*}}.
$$
\item  In the other cases 
$$
e^{-\|\psi\|_*}|x-y|\leq |\psi(x)-\psi(y)|\leq  e^{\|\psi\|_*}|x-y|.
$$
\end{enumerate}
\end{Prop}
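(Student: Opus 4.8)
The plan is to unwind the definition of $\Phi$ in each of the three regimes and solve the resulting logarithmic inequalities. Throughout, fix $x\neq y$, abbreviate $s:=|x-y|$ and $r:=|\psi(x)-\psi(y)|$, and write $K:=\|\psi\|_*$. By the very definition of the norm we have the single inequality $\Phi(r,s)\leq K$, and the entire content of the proposition is extracting the three two-sided bounds from this one inequality according to the position of $r$ and $s$ relative to $1$. I will use repeatedly that $K\geq 1$, which is what allows the sharp constants to be rounded up to the cleaner ones in the statement.

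For item (1) both $r,s\geq 1$, so $(1-s)(1-r)\geq 0$ and $|\ln s|=\ln s$, $|\ln r|=\ln r$. Here $\Phi(r,s)\leq K$ says that \emph{both} ratios $\frac{1+\ln r}{1+\ln s}$ and $\frac{1+\ln s}{1+\ln r}$ are $\leq K$. The first gives $\ln r\leq K(1+\ln s)-1$, hence $r\leq e^{K-1}s^{K}\leq e^{K}s^{K}$, the upper bound. The second gives $\ln s\leq K(1+\ln r)-1$; solving for $r$ and using $1/K\leq 1$ yields $r\geq e^{1/K-1}s^{1/K}\geq e^{-1}s^{1/K}$, the lower bound. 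Item (2), where $r,s\leq 1$, is handled identically, except that now $|\ln s|=-\ln s$ and $|\ln r|=-\ln r$; reading the same two ratios in this regime produces $r\geq e^{1-K}s^{K}\geq e^{-K}s^{K}$ and $r\leq e^{1-1/K}s^{1/K}\leq e\,s^{1/K}$, as claimed.

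Item (3) is the case where $r$ and $s$ lie on opposite sides of $1$, so $(1-s)(1-r)\leq 0$ and the active branch is $\Phi(r,s)=(1+|\ln s|)(1+|\ln r|)\leq K$. Setting $a:=|\ln s|\geq 0$ and $b:=|\ln r|\geq 0$ and expanding $(1+a)(1+b)\leq K$ gives $a+b\leq K-1-ab\leq K$. Since $r$ and $s$ straddle $1$, one log is nonnegative and the other nonpositive, so $|\ln(r/s)|=a+b\leq K$, which is precisely $e^{-K}s\leq r\leq e^{K}s$.

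I expect no genuine obstacle here; this is the ``straightforward computation'' announced in the text. The only points needing care are bookkeeping: correctly identifying the active branch of $\Phi$ (noting that the two branches agree on the boundary $r=1$ or $s=1$, so the trichotomy is unambiguous), keeping the signs of the logarithms straight within each regime, and invoking $K\geq 1$ at the right moments to pass from the sharp constants $e^{K-1}$, $e^{1/K-1}$, $e^{1-K}$, $e^{1-1/K}$ to the constants $e^{K}$, $e^{-1}$, $e^{-K}$, $e$ that appear in the statement.
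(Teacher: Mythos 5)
Your computation is correct and is precisely the ``straightforward computation'' that the paper invokes without writing out (the paper offers no proof of Proposition \ref{p1} beyond that remark). All three case analyses, the branch identification for $\Phi$, and the constant roundings check out.
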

As an application we obtain the following useful lemma.
\begin{Lemm}
\label{g}
 For every  \mbox{$r>0$}  and a homeomorphism  \mbox{$\psi$}   one has
$$4\psi(B(x_0,r))\subset B(\psi(x_0), g_\psi(r)),
$$  
where\footnote{This notation means that for every ball  \mbox{$B\subset\psi(B(x_0,r))$}  we have  \mbox{$4B \subset B(\psi(x_0), g_\psi(r))$}.},
\begin{equation*}
g_\psi(r):=\left\{ 
\begin{array}{ll} 4e^{ \|\psi\|_{*}}r^{ \|\psi\|_{*}},\quad {\rm if}\quad r\geq 1, \\
4\max\{ er^{\frac{1}{\|\psi\|_{*}}}; e^{\|\psi\|_{*}}r\}, \quad {\rm if}\quad  0<r<1.
\end{array} \right.    
     \end{equation*}
In particular,
\begin{equation}
\label{ss}
 |\ln\Big(\frac{ 1+|\ln g_\psi(r)|  }{1+|\ln r|} \Big)|\lesssim  1+\ln\big(1+\|\psi\|_{*}\big).
\end{equation}
\end{Lemm}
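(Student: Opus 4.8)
The plan is to first reduce the doubling-type inclusion to a plain inclusion of $\psi(B(x_0,r))$ into a ball centered at $\psi(x_0)$, and then read off the radius from Proposition \ref{p1}. Concretely, I claim it suffices to prove that
$$
\psi(B(x_0,r)) \subset B\big(\psi(x_0), \tfrac14 g_\psi(r)\big).
$$
Indeed, granting this, let $B = B(c,\rho)$ be any ball contained in $\psi(B(x_0,r))$; then $B \subset B(\psi(x_0), \tfrac14 g_\psi(r))$ forces $\rho + |c - \psi(x_0)| \le \tfrac14 g_\psi(r)$, and the triangle inequality gives, for every $w \in 4B = B(c,4\rho)$, the bound $|w - \psi(x_0)| < 4\rho + |c - \psi(x_0)| \le 4(\rho + |c-\psi(x_0)|) \le g_\psi(r)$, i.e. $4B \subset B(\psi(x_0), g_\psi(r))$, which is exactly the asserted inclusion in the sense of the footnote.

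To prove the reduced inclusion I would apply Proposition \ref{p1} to the pair $(x, x_0)$ for an arbitrary $x$ with $d := |x - x_0| < r$ and take the supremum over the three regimes of that proposition. For $r \ge 1$ one checks, case by case according to whether $d$ and $|\psi(x)-\psi(x_0)|$ are $\le 1$ or $\ge 1$, that each of the upper bounds $e^{\|\psi\|_*} d^{\|\psi\|_*}$, $e\, d^{1/\|\psi\|_*}$, $e^{\|\psi\|_*} d$ is dominated by $e^{\|\psi\|_*} r^{\|\psi\|_*}$ (using $d < r$, $\|\psi\|_* \ge 1$, $r \ge 1$), which gives $\tfrac14 g_\psi(r) = e^{\|\psi\|_*}r^{\|\psi\|_*}$. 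For $0 < r < 1$ only the cases $d < 1$ occur, and the relevant upper bounds $e\, d^{1/\|\psi\|_*} \le e\, r^{1/\|\psi\|_*}$ and $e^{\|\psi\|_*} d \le e^{\|\psi\|_*} r$ combine into the maximum appearing in the definition of $g_\psi(r)$. This part is entirely mechanical once the correct regime is selected.

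The only real work is the logarithmic bound \eqref{ss}. Writing $L := \|\psi\|_* \ge 1$ and $s := |\ln r|$, I would establish the two-sided comparison
$$
\frac{1}{C(1+L)}\,(1 + |\ln r|) \;\le\; 1 + |\ln g_\psi(r)| \;\le\; C(1+L)\,(1 + |\ln r|)
$$
for a universal constant $C$, since taking logarithms of this immediately yields \eqref{ss}. For $r \ge 1$ one has the exact identity $|\ln g_\psi(r)| = \ln 4 + L + L s$, and both inequalities are elementary. For $0 < r < 1$ one has $\ln g_\psi(r) = \ln 4 + \max\{1 - s/L,\, L - s\}$, and the comparison follows by splitting according to the sign of $L - s$.

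The main obstacle, and the place requiring genuine care, is the regime $s \ge L$: there the maximum equals $1 - s/L$, so $\ln g_\psi(r)$ becomes negative for $s$ large and one must track $|\ln g_\psi(r)| = |\ln 4 + 1 - s/L|$ through its change of sign. The lower bound is the delicate one: for $s/L$ beyond the threshold $1 + \ln 4$ one uses $|\ln 4 + 1 - s/L| = s/L - \ln 4 - 1 \gtrsim s/L$ to recover a quantity comparable to $s/(1+L)$, while for $s/L$ below the threshold one simply bounds $1 + |\ln g_\psi(r)| \ge 1$ against $1 + s \lesssim 1 + L$. Collecting the constants across the finitely many elementary subcases produces the claimed universal $C$, and hence \eqref{ss}.
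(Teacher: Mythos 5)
Your argument is correct and follows essentially the same route as the paper: the inclusion is read off from Proposition \ref{p1} (your explicit reduction of the $4B$-statement to $\psi(B(x_0,r))\subset B\big(\psi(x_0),\tfrac14 g_\psi(r)\big)$ is exactly what the factor $4$ in $g_\psi$ is for), and \eqref{ss} is obtained, as in the paper, by bounding both $\frac{1+|\ln g_\psi(r)|}{1+|\ln r|}$ and its reciprocal by $C(1+\|\psi\|_*)$ through the same case analysis on $r\ge 1$ versus $r<1$, on which term realizes the maximum, and on whether $|\ln r|$ is large or small relative to $\|\psi\|_*$. The only slip is in the regime $s\geq L$: the inequality $s/L-1-\ln 4\gtrsim s/L$ fails for $s/L$ just above $1+\ln 4$, so the threshold must be pushed up to, say, $2(1+\ln 4)$ (the paper uses $|\ln r|\geq 8\|\psi\|_*$); this is a one-line repair that does not affect the structure of the proof.
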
\begin{proof}
The first inclusion follows from Proposition \ref{p1} and the definition of  \mbox{$g_\psi$}. Let us check (\ref{ss}).
This comes from an easy computation using the following trivial fact: if  \mbox{$\alpha,\beta,\gamma>0$}  then 
$$
\sup(\beta, \frac1\beta)\leq \alpha^\gamma \Longleftrightarrow   |\ln(\beta)|\leq\gamma \ln(\alpha).
$$
\mbox{$\bullet$}  If   \mbox{$r\geq 1$}  then 
$$
1\leq \frac{ 1+|\ln g_\psi(r)|  }{1+|\ln r|}=\frac{ 1+\ln4+\|\psi\|_{*}+\ln r  }{1+\ln r}\leq 3+\|\psi\|_{*}.
$$
\mbox{$\bullet$}  If  \mbox{$r< 1$}  then we have to deal with two possible values of  \mbox{$g_\psi(r)$}.

\underline{\it Case 1:} If   \mbox{$g_\psi(r)=4er^{\frac{1}{\|\psi\|_{*}}}$}  then
$$
|\ln g_\psi(r)| =|\ln 4+1+\|\psi\|_{*}^{-1}\ln(r)|.
$$
Since  \mbox{$\|\psi\|_{*}\geq 1$}, we get
$$
\frac{ 1+|\ln g_\psi(r)|  }{1+|\ln r|}\leq \frac{ 3+\frac1{ \|\psi\|_{*} }|\ln r|  }{1+|\ln r|}\leq \frac{ 3+|\ln r|  }{1+|\ln r|}\leq 3.
$$
To estimate  \mbox{$\frac{1+|\ln r|}{ 1+|\ln g_\psi(r)|  }$}  we consider two possibilities.

- If  \mbox{$|\ln(r)|\leq 8 \|\psi\|_{*}$}  then 
$$
\frac{1+|\ln r|}{ 1+|\ln g_\psi(r)|  }\leq 1+|\ln r|\leq 1+8 \|\psi\|_{*}.
$$
- If  \mbox{$|\ln(r)|\geq 8 \|\psi\|_{*}$}  then 
$$ 
|\ln(4)+1+\|\psi\|_{*}^{-1}\ln(r)|\geq \frac12 \|\psi\|_{*}^{-1} |\ln(r)|,
$$
and so 
$$
\frac{1+|\ln r|}{ 1+|\ln g_\psi(r)|  }\leq \frac{1+|\ln r|}{1+\frac12 \|\psi\|_{*}^{-1} |\ln(r)|}\leq 2(1+\|\psi\|_{*}).
$$
\underline{\it Case 2:} If   \mbox{$g_\psi(r)=4e^{\|\psi\|_{*}}r$}  then
$$
|\ln g_\psi(r)| =|\ln 4+\|\psi\|_{*}+\ln(r)|.
$$
Thus,
$$
\frac{ 1+|\ln g_\psi(r)|  }{1+|\ln r|}\leq \frac{ 3+\|\psi\|_{*} +|\ln r|  }{1+|\ln r|}\leq 3+\|\psi\|_{*}.
$$
As previously for estimating   \mbox{$\frac{1+|\ln r|}{ 1+|\ln g_\psi(r)|  }$}, we consider two possibilities.

- If  \mbox{$|\ln(r)|\leq 2(\ln(4)+ \|\psi\|_{*})$}  then 
$$
\frac{1+|\ln r|}{ 1+|\ln g_\psi(r)|  }\leq 1+|\ln r|\leq 5+2 \|\psi\|_{*}.
$$
- If  \mbox{$|\ln(r)|\geq 2(\ln 4+ \|\psi\|_{*})$}  then  \mbox{$|\ln(4)+\|\psi\|_{*}+\ln r|\geq \frac12|\ln(r)|$}
and so 
$$
\frac{1+|\ln r|}{ 1+|\ln g_\psi(r)|  } \leq \frac{1+|\ln r|}{1+\frac12|\ln(r)|}\leq 2.
$$
\end{proof}

\begin{rema}
\label{sss} The estimate \eqref{ss} remains  valid when we multiply  \mbox{$g_\psi(r)$}  by any positive constant. 
\end{rema}

\section{ The  \mbox{$\lb$}  space}
Let us now detail some properties of the space  \mbox{$\lb$}  introduced in the first section of this paper.
\begin{Prop} 
\label{pro3}
The following properties hold true.\\
{\rm (i)} The space  \mbox{$\lb$}  is a Banach space included in  \mbox{${\rm BMO}$}  and strictly containing  \mbox{$L^\infty(\R^2)$}.\\
{\rm (ii)} For every  \mbox{$g\in \mathcal C^\infty_0(\R^2)$}  and  \mbox{$f\in \lb$}  one has
\begin{equation}
\| g\ast f\|_{\lb}\leq \|g\|_{L^1}\|  f\|_{\lb}.
\label{eq:comp} \end{equation}
\end{Prop}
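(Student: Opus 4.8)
The plan is to dispatch the two assertions in turn; throughout I abbreviate the second term of the norm by
$$
N(f):=\sup_{B_1,B_2}\frac{|\av_{B_2}(f)-\av_{B_1}(f)|}{1+\ln\big(\frac{1-\ln r_2}{1-\ln r_1}\big)},
$$
the supremum being over the admissible pairs ($0<r_1\le 1$, $2B_2\subset B_1$), so that $\|f\|_{\lb}=\|f\|_{{\rm BMO}}+N(f)$. The inclusion $\lb\subset{\rm BMO}$ is immediate from $\|f\|_{{\rm BMO}}\le\|f\|_{\lb}$. For $L^\infty\subset\lb$ I would use $\|f\|_{{\rm BMO}}\le 2\|f\|_{L^\infty}$ together with $|\av_{B_2}(f)-\av_{B_1}(f)|\le 2\|f\|_{L^\infty}$ and the fact that the denominator defining $N$ is always $\ge 1$, giving $\|f\|_{\lb}\le 4\|f\|_{L^\infty}$. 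That $\|\cdot\|_{\lb}$ is a norm modulo constants follows because $N$ is a seminorm, being a supremum of absolute values of the linear functionals $f\mapsto\av_{B_2}(f)-\av_{B_1}(f)$ divided by positive constants, while $\|f\|_{{\rm BMO}}=0$ already forces $f$ constant.

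For completeness, let $(f_n)$ be Cauchy in $\lb$. Since $\|\cdot\|_{{\rm BMO}}\le\|\cdot\|_{\lb}$ it is Cauchy in ${\rm BMO}$, which is complete, so $f_n\to f$ in ${\rm BMO}$ for some $f$. The key step is that for each \emph{fixed} admissible pair $(B_1,B_2)$, inequality \eqref{22} gives
$$
\big|\big(\av_{B_2}(f_n)-\av_{B_1}(f_n)\big)-\big(\av_{B_2}(f)-\av_{B_1}(f)\big)\big|\lesssim\ln\big(1+\tfrac{r_1}{r_2}\big)\,\|f_n-f\|_{{\rm BMO}}\to 0,
$$
so the average-differences of $f_n$ converge to those of $f$. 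Now fix $\varepsilon>0$ and $n,m$ large with $N(f_n-f_m)\le\varepsilon$; for each fixed pair the numerator of $N(f_n-f_m)$ tends, as $m\to\infty$, to that of $N(f_n-f)$, whence the corresponding quotient is $\le\varepsilon$ for every pair and thus $N(f_n-f)\le\varepsilon$. With $\|f_n-f\|_{{\rm BMO}}\to 0$ this gives $f_n\to f$ in $\lb$ and $f\in\lb$. For the strict containment I would exhibit the unbounded iterated logarithm $f(x)=\ln\big(1-\ln|x|\big)$ near the origin (truncated away from it); one checks $f\in{\rm BMO}$ and that the telescoping $\av_{B(0,r_2)}(f)-\av_{B(0,r_1)}(f)\approx\ln\frac{1-\ln r_2}{1-\ln r_1}$ is controlled exactly by the denominator of $N$, so $N(f)<\infty$. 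This is the function from Yudovich \cite{Y2} announced in Remark \ref{r2}.

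Assertion (ii) rests on the translation invariance of $\|\cdot\|_{\lb}$. Writing $f_y:=f(\cdot-y)$, Fubini gives $\av_B(g\ast f)=\int g(y)\av_B(f_y)\,dy$ for every ball $B$. For the ${\rm BMO}$ part, subtracting the average inside the integral and applying Minkowski's integral inequality,
$$
\av_B|g\ast f-\av_B(g\ast f)|\le\int|g(y)|\,\av_B|f_y-\av_B(f_y)|\,dy\le\|g\|_{L^1}\|f\|_{{\rm BMO}},
$$
where $\|f_y\|_{{\rm BMO}}=\|f\|_{{\rm BMO}}$. For the $N$-part, the crucial observation is that translating the pair $(B_1,B_2)$ by $-y$ preserves both the radii $r_1,r_2$ and the relation $2B_2\subset B_1$, so the denominator of $N$ is unchanged; hence $|\av_{B_2}(f_y)-\av_{B_1}(f_y)|=|\av_{B_2-y}(f)-\av_{B_1-y}(f)|\le N(f)\big(1+\ln(\tfrac{1-\ln r_2}{1-\ln r_1})\big)$. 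Integrating against $|g|$, dividing by the denominator and taking the supremum yields $N(g\ast f)\le\|g\|_{L^1}N(f)$, and adding the two bounds gives \eqref{eq:comp}.

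\textbf{Main obstacle.} The only genuinely delicate point is completeness, namely the interchange of the limit in $m$ with the supremum over all admissible pairs of balls — legitimate because the Cauchy bound $\varepsilon$ is uniform in the pair — together with the explicit verification that the double logarithm belongs to $\lb$ (this is precisely what motivated the choice of denominator). The convolution estimate, by contrast, is a one-line consequence of translation invariance once one notices that the denominator is unaffected by translating a pair of balls.
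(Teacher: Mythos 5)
Your treatment of completeness, of the embedding $L^\infty\subset\lb$, and of the convolution estimate (ii) is correct and follows essentially the same route as the paper: the identity $\av_{B(x,r)}(g\ast f)=(g\ast \av_{B(\cdot,r)}(f))(x)$, combined with the observation that translating an admissible pair of balls preserves the radii and the relation $2B_2\subset B_1$, is exactly the paper's argument for (ii), and your interchange of the limit in $m$ with the supremum over pairs is a careful spelling-out of the paper's terser appeal to convergence in $L^1_{\text{loc}}$.

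The genuine gap is in the strict containment $L^\infty\subsetneq\lb$. You exhibit the right function $f(x)=\ln(1-\ln|x|)$, but you only verify the seminorm $N$ on the family of concentric balls centred at the origin, whereas $N(f)$ is a supremum over \emph{all} pairs $B_2=B(x_2,r_2)$, $B_1=B(x_1,r_1)$ with $0<r_1\le 1$ and $2B_2\subset B_1$; the configurations that need an argument are those where $B_2$ is a tiny ball placed close to (but not at) the singularity while $B_1$ is centred elsewhere. The phrase ``one checks \dots $N(f)<\infty$'' is precisely the nontrivial step, and it is where the paper spends most of its proof of (i): exploiting that $f$ is radial, decreasing and convex, it reduces the maximisation of $|\av_{B(x_2,r_2)}(f)-\av_{B(x_1,r_1)}(f)|$ over admissible $x_2$ to two extremal positions $\tilde x_1$ (pushed toward the origin) and $\hat x_1$ (pushed away), bounds the supremum by $\av_{B(0,r_2)}(f)-f(4r_1)$, and only then computes $\av_{B(0,r_2)}(f)=\ln(1-\ln r_2)+\mathcal O(1)$ by an integration by parts and $f(4r_1)=\ln(1-\ln r_1)+\mathcal O(1)$. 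Without some such reduction from arbitrary admissible pairs to the origin-centred one, your verification does not establish $f\in\lb$. Everything else in your proposal stands.
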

\begin{proof}
(i) Completeness of the space. Let  \mbox{$(f_n)_n$}  be a Cauchy sequence in  \mbox{$\lb$}. Since  \mbox{${\rm BMO}$}  is complete then this sequences converges in  \mbox{${\rm BMO}$}  and then in  \mbox{$L^1_{\text{loc}}$}. 
Using the definition and the the convergence in  \mbox{$L^1_{\text{loc}}$}, we get that the convergence holds in  \mbox{$\lb$}.

 It remains to check that  \mbox{$L^\infty \subsetneq \lb$}. Since  \mbox{$L^\infty$}  is obviously embedded into  \mbox{$\lb$}, we have just to build an unbounded function belonging to  \mbox{$\lb$}. Take 
\begin{equation*}
f(x)=\left\{ 
\begin{array}{ll} \ln(1-\ln|x|) \qquad {\rm if}\quad |x|\leq 1\\
0,\qquad \qquad {\rm if}\quad  |x|\geq 1.
\end{array} \right.    
     \end{equation*}
     
It is clear that both  \mbox{$f$}  and  \mbox{$\nabla f$}  belong to  \mbox{$L^2(\R^2)$}   meaning that   \mbox{$f\in H^1(\R^2)\subset  {\rm BMO}$}.

Before going further three preliminary remarks are necessary. 

\mbox{$\bullet$}  Since  \mbox{$f$}  is radially  symmetric and decreasing  then, for every  \mbox{$r>0$},  the mapping   \mbox{$x\mapsto \av_{B(x,r)}f$}   is radial and decreasing.

\mbox{$\bullet$}  For the same reasons the mapping    \mbox{$r\mapsto \av_{B(0,r)}(f)$}   is decreasing.

\mbox{$\bullet$}  Take \mbox{$(r,\rho) \in ]0,+\infty[^2$}  and consider the problem of maximization of
   \mbox{$\av_{B(x_1,r)}(f)-\av_{B(x_2,r)}(f)$}  when   \mbox{$|x_1-x_2|=\rho$}. The convexity of  \mbox{$f$}  implies that 
\mbox{$x_1=0$}  and  \mbox{$|x_2|=\rho$}  are solutions of this problem.

\

We fix  \mbox{$r_1$}  and  \mbox{$r_2$}  such that  \mbox{$r_1\leq 1$}  and  \mbox{$2r_2\leq r_1$}.
For every  \mbox{$x_1\in\R^2$}  one defines  \mbox{$\tilde x_1$}  and  \mbox{$\hat x_1$}  as follows:
\begin{equation*}
\tilde x_1=\left\{ 
\begin{array}{ll} x_1(1-\frac{r_2+r_1}{|x_1|}) \qquad {\rm if}\quad |x_1|\geq r_2+r_1\\
0,\qquad \qquad {\rm if}\quad  |x_1|\leq r_2+r_1,
\end{array} \right.    
     \end{equation*}
and
\begin{equation*}
\hat x_1=\left\{ 
\begin{array}{ll} x_1(1+\frac{r_2+r_1}{|x_1|}) \qquad {\rm if}\quad |x_1|\neq 0\\
({r_2+r_1},0)\qquad \qquad {\rm if}\quad  |x_1|=0.
\end{array} \right.    
     \end{equation*}
Let
\mbox{$A(x_1)$} be the set of  admissible  \mbox{$x_2$}:  the set of  \mbox{$x_2$}  such that  \mbox{$2B(x_2,r_2)\subset B(x_1,r_1)$}. 
Using the two preliminary remarks above, we see that 
$$
\sup_{ x_2\in A(x_1)}|\av_{B(x_2,r_2)}(f)-\av_{B(x_1,r_1)}(f)|\leq \max\{J_{1},J_{2}\}.
$$
with
\begin{eqnarray*}
J_{1}&=&\av_{B(\tilde x_1,r_2)}(f)-\av_{B(x_1,r_1)}(f),
\\
J_{2}&=&\av_{B(x_1,r_1)}(f)-\av_{B(\hat{x}_1,r_2)}(f).
\end{eqnarray*} 
In fact, if  \mbox{$\av_{B(x_2,r_2)}(f)-\av_{B(x_1,r_1)}(f)$}  is positive (resp. negative) then it is obviously  dominated by  \mbox{$J_{1}$}  (resp.  \mbox{$J_{2}$}).
Thus, we obtain
$$
\sup_{ x_2\in A(x_1)}|\av_{B(x_2,r_2)}(f)-\av_{B(x_1,r_1)}(f)|\leq J_{1}+J_{2}= \av_{B(\tilde x_1,r_2)}(f)-\av_{B(\hat{x}_1,r_2)}(f).
$$
The right hand side is maximal in the configuration when    \mbox{$\tilde x_1=0$}  and  \mbox{$\hat{x}_1$}  the furthest away from  \mbox{$0$}.
This means when 
\mbox{$|x_1|=r_1+r_2$},   \mbox{$\tilde x_1=0$}  and   \mbox{$|\hat{x}_1|=2(r_1+r_2)$}.

Since   \mbox{$f$}  is increasing (going to the axe) then
$$
 \av_{B(\hat{x}_2,r_1)}(f)\geq f(4r_1).
  $$
 Finally, we get for all  \mbox{$x_1\in\mathbb R^2$}  and  \mbox{$ x_2\in A(x_1)$}
 \begin{eqnarray*}
|\av_{B(x_2,r_2)}(f)-\av_{B(x_1,r_1)}(f)|\leq \av_{B(0,r_2)}(f)- f(4r_1).
\end{eqnarray*}
Now it is easy to see that
$$
f(4r_1)= \ln(1-\ln(r_1))+ {\mathcal O}(1),
$$
and  (with an integration by parts)
\begin{eqnarray*}
\av_{B(0,r_2)}(f) 
& =& \ln(1-\ln(r_2)) + \frac{1}{r_2^2}\int_0^{r_1} \frac{1}{1-\ln(\rho)} \rho d\rho 
\\
& =& \ln(1-\ln(r_2)) + {\mathcal O}(1).
\end{eqnarray*}
This yields,
$$
|\av_{B(x_2,r_2)}(f)-\av_{B(x_1,r_1)}(f)|\leq \ln\Big(\frac{1-\ln(r_2)}{1-\ln(r_1)}\Big)+ {\mathcal O}(1),
$$
as desired.

\

(ii) Stability by convolution. (\ref{eq:comp}) follows from the fact that for all  \mbox{$r>0$}  
$$
x\mapsto\av_{B(x,r)}(g\ast f)=(g\ast\av_{B(\cdot,r)}(f))(x).
$$

\end{proof}
The advantage of using the space  \mbox{$\lb$}  lies in the following logarithmic estimate which is the main ingredient for proving Theorem \ref{main}.
\begin{Theo}
\label{decom}
There exists a universal constant  \mbox{$C>0$}  such that 
$$
\|f{\rm o}\psi\|_{\lb\cap L^p}\leq C\ln(1+\|\psi\|_*)\|f\|_{\lb\cap L^p},
$$
for any  Lebesgue  measure preserving  homeomorphism  \mbox{$\psi$}.
\end{Theo}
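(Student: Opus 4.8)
The plan is to treat the two pieces of the $\lb\cap L^p$ norm separately, the $L^p$ part being immediate: since $\psi$ preserves Lebesgue measure, $\|f\circ\psi\|_{L^p}=\|f\|_{L^p}$, and the same measure preservation gives, for $F:=f\circ\psi$ and any ball $B=B(x_0,r)$, the transfer identity $\av_B(F)=\av_{\psi(B)}(f)$. Thus every average of $F$ over a ball becomes an average of $f$ over the image set $E:=\psi(B)$, which satisfies $|E|=|B|$ and, by Lemma \ref{g}, is contained in the genuine ball $\tilde B:=B(\psi(x_0),g_\psi(r))$. The whole problem is therefore reduced to comparing averages of $f$ over the distorted sets $\psi(B)$ with averages of $f$ over honest balls, all distortions of scale being quantified by \eqref{ss}.

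The key extra ingredient I would isolate is a two-sided control on the way $\psi(B)$ distributes inside $\tilde B$. Applying Lemma \ref{g} and Proposition \ref{p1} to $\psi^{-1}$ (legitimate since $\|\psi^{-1}\|_*=\|\psi\|_*$), one obtains, for every $\rho>0$,
\begin{equation*}
\pi\min\{r,g_\psi^{-1}(\rho)\}^2\ \le\ |\psi(B)\cap B(\psi(x_0),\rho)|\ \le\ \pi\min\{r,g_\psi(\rho)\}^2 .
\end{equation*}
The upper bound says $\psi(B)$ cannot concentrate near the center $\psi(x_0)$, while the lower bound, coming from the fact that $\psi$ does not contract too strongly, says the bulk of $\psi(B)$ stays well inside $\tilde B$ and cannot escape to its boundary. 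This distribution estimate is what eventually replaces the crude bi-Lipschitz information by the $\log$-Lipschitz one.

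Next I would establish the two required bounds for $F$. For concentric balls I introduce a \emph{lacunary} family of scales $R_k$ defined by $1-\ln R_k=2^k(1-\ln g_\psi(r))$, running from $R_0\sim g_\psi(r)$ down to $R\sim r$; by \eqref{ss} the number of such scales is $M\lesssim 1+\ln(1+\|\psi\|_*)$. Telescoping the second-term bound of the $\lb$ norm along this family controls $|\av_{\tilde B}(f)-\av_{B(\psi(x_0),r)}(f)|$ by $\sum_k\|f\|_{\lb}(1+\ln 2)\lesssim\ln(1+\|\psi\|_*)\|f\|_{\lb}$, since the logarithmic weights telescope to $\ln\frac{1-\ln r}{1-\ln g_\psi(r)}$, which is $\lesssim\ln(1+\|\psi\|_*)$ again by \eqref{ss}. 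Splitting $\psi(B)$ into the corresponding annuli and comparing $f$ on each annulus to the average over the ball at that scale (via John--Nirenberg inside $\tilde B$ together with the measure-distribution estimate to control the resulting entropy and scale weights) should yield the oscillation bound $\av_{\psi(B)}|f-\av_{\tilde B}(f)|\lesssim\ln(1+\|\psi\|_*)\|f\|_{\lb}$; this gives at once the BMO estimate for $F$. The second-term estimate for $F$ then follows by writing $\av_{\psi(B_2)}(f)-\av_{\psi(B_1)}(f)$ as a sum of two such oscillation terms plus the honest ball difference $\av_{\tilde B_2}(f)-\av_{\tilde B_1}(f)$, the latter being handled by the $\lb$ norm of $f$ and \eqref{ss} to produce the required weight $1+\ln\frac{1-\ln r_2}{1-\ln r_1}$. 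Throughout, the threshold $r=1$ must be treated with care: when $g_\psi(r)>1$ the chain crosses the scale $1$, below which one uses the $\lb$ control and above which $\lb$ degenerates to BMO; for genuinely large balls I would discard the $\lb$ machinery and bound the oscillation of $F$ directly by H\"older's inequality through the available $L^p$ norm, $\av_B|F-\av_B(F)|\lesssim |B|^{-1/p}\|f\|_{L^p}$, which is small precisely in that regime.

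The hard part will be the annular oscillation estimate $\av_{\psi(B)}|f-\av_{\tilde B}(f)|\lesssim\ln(1+\|\psi\|_*)\|f\|_{\lb}$. A naive application of John--Nirenberg with the mere BMO norm produces the factor $\ln(|\tilde B|/|B|)=2\ln(g_\psi(r)/r)$, which is of the size of the bi-Lipschitz norm of $\psi$ (it grows like $\|\psi\|_*|\ln r|$) and is hopeless; this is exactly the obstruction recorded in the last of the remarks following Theorem \ref{main}, and the reason the estimate fails for BMO. Getting the correct factor $\ln(1+\|\psi\|_*)$ forces a genuine interplay between the two-sided measure-distribution estimate (which prevents $\psi(B)$ from sampling $f$ on its John--Nirenberg extremal set) and the lacunary $\lb$-chaining (which telescopes the weights), and it is the bookkeeping of these two effects across the scale $1$ that constitutes the core of the argument.
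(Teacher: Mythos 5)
Your framework matches the paper's up to the point where the real work starts: the $L^p$ part by measure preservation, the transfer of averages to $\psi(B)\subset\tilde B=B(\psi(x_0),g_\psi(r))$, the reduction to small balls via the $L^p$ norm, and the use of \eqref{ss} to absorb the scale distortion are all exactly as in the paper, and your diagnosis that a single-scale John--Nirenberg argument with the BMO norm alone produces the useless factor $\ln(|\tilde B|/|B|)\sim \|\psi\|_*+|\ln r|$ is also correct. But the one estimate you yourself identify as the heart of the matter, $\av_{\psi(B)}|f-\av_{\tilde B}(f)|\lesssim\ln(1+\|\psi\|_*)\|f\|_{\lb}$, is left at the level of ``should yield'', and the key lemma you propose to prove it with is not the right tool. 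Your two-sided bound on $|\psi(B)\cap B(\psi(x_0),\rho)|$ only records how the mass of $\psi(B)$ is distributed radially around the \emph{image of the center}. The difficulty is elsewhere: a function $f\in\lb$ can be logarithmically unbounded near an arbitrary point of $\tilde B$, and $\psi(B)$ is a set of measure $|B|$ that can spread thin tentacles throughout the much larger ball $\tilde B$; what must be controlled is how much of $\psi(B)$ is \emph{locally thin}, i.e.\ close to its own boundary, because it is on those thin pieces that the local averages of $f$ can deviate from $\av_{\tilde B}(f)$ by the full logarithmic weight. The radial distribution around $\psi(x_0)$ is blind to this (a thin snake staying at distance $g_\psi(r)/2$ from $\psi(x_0)$ is perfectly consistent with your estimate), so the ``interplay'' and ``bookkeeping'' you invoke cannot actually be carried out from the ingredients you list.

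The paper's mechanism, which is the genuinely missing idea, is: (i) a Whitney decomposition of the open set $\psi(B)$ into balls $O_j$ with $r_{O_j}\simeq d(O_j,\partial\psi(B))$, so that the oscillation integral becomes a sum of ball-average comparisons weighted by $|O_j|$ and by the $\lb$-weight $1+\ln\frac{1-\ln r_j}{1-\ln g_\psi(r)}$; and (ii) the measure estimate of Lemma \ref{equivalence}, namely that the total Whitney mass coming from radii in $(e^{-(k+1)}r,e^{-k}r]$ is $\lesssim e^{-k/\|\psi\|_*}\,r^{1+1/\|\psi\|_*}$, proved by pulling the boundary layer $\{y\in\psi(B):\ d(y,\partial\psi(B))\le Ce^{-k}r\}$ back through the measure-preserving map $\psi$ into a thin annulus around $\partial B$ via Proposition \ref{p1}. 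The geometric decay in $k$ played against the $\ln(k+2-\ln r)$-type weights, summed and split at $N\sim\|\psi\|_*(\|\psi\|_*-\ln r)$, is what produces the factor $\ln(1+\|\psi\|_*)$. Without an estimate of this boundary-layer type your argument cannot close; and once you have it, your lacunary chaining of concentric scales is unnecessary, since the definition of $\lb$ already gives $|\av_{B_2}(f)-\av_{B_1}(f)|\le\big(1+\ln\frac{1-\ln r_2}{1-\ln r_1}\big)\|f\|_{\lb}$ in a single step.
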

\begin{proof}[Proof of Theorem \ref{decom}] 
Of course we are concerned with $\psi$ such that $\|\psi\|_*$ is finite (if not the inequality is empty).
 Without loss of generality one can assume that  \mbox{$\|f\|_{\lb\cap L^p}=1$}.  Since  \mbox{$\psi$}  preserves Lebesgue  measure then the  \mbox{$L^p$}-part of the norm is conserved. For the  two other parts of the norm, we will proceed in two steps. In the first step we consider the  \mbox{${\rm BMO}$} term of the norm and in the second one  we deal with the other term. 
\subsection*{ Step 1} Let   \mbox{$B=B(x_0,r)$} be  a given ball of  \mbox{$\R^2$}. 
By using the  \mbox{$L^p$}-norm we need only to deal with balls whose radius is smaller than a universal constant  \mbox{$\delta_0$}  (we want   \mbox{$r$}  to be small with respect to the constants appearing in Whitney covering lemma below).  Since  \mbox{$\psi$}  is a Lebesgue  measure preserving homeomorphism   then  \mbox{$\psi(B)$}  is an open connected\footnote{We have also that  \mbox{$  \psi(B)^C=\psi(B^C)$}  and   \mbox{$\psi(\partial B)=\partial(\psi(B)).$}    } set with  \mbox{$|\psi(B)|=|B|$}. By Whitney covering lemma, there exists a collection of balls  \mbox{$(O_j)_j$}  such that: 

- The collection of double ball is a bounded covering:
$$
\psi(B)\subset \bigcup 2O_j.
$$

- The collection is disjoint and, for all  \mbox{$j$}, 
$$
O_j\subset \psi(B).
$$

- The Whitney property is verified:
$$
r_{O_j}\simeq d(O_j, \psi(B)^c).
$$

\

\mbox{$\bullet$}   {\it Case 1}:  \mbox{$r\leq\frac14 e^{-\|\psi\|_*}$}. In this case
$$
g_\psi(r)\leq 1.
$$
We set  \mbox{$\tilde B:= B(\psi(x_0), g_\psi(r))$}. 
Since  \mbox{$\psi$}  preserves  Lebesgue  measure we get

\begin{eqnarray*}
 \av_{B}|f\co\psi- \av_{B}(f\co\psi)|&=&\av_{\psi(B)}|f- \av_{\psi(B)}(f)|
\\
&\leq &    2  \av_{\psi(B)}|f- \av_{\tilde B}(f)|.
\end{eqnarray*}
Using the notations above 
\begin{eqnarray*}
\av_{\psi(B)}|f- \av_{\tilde B}(f)|&\lesssim & \frac{1}{|B|}\sum_j |O_j|\av_{2O_j}\big|f- \av_{\tilde B}(f)\big|
\\
&\lesssim & I_1+I_2,
\end{eqnarray*}
with
\begin{eqnarray*}
I_1&=& \frac{1}{|B|}\sum_j |O_j|\big |\av_{2O_j}(f)- \av_{2O_j}(f)\big |\\
I_2&= & \frac{1}{|B|}\sum_j |O_j|\big |\av_{2O_j}(f)- \av_{\tilde B}(f)\big |.
\end{eqnarray*}
On one hand, since   \mbox{$\sum|O_j|\leq |B|$}  then
\begin{eqnarray*}
I_1&\leq& \frac{1}{|B|}\sum_j |O_j|\|f\|_{{\rm BMO}}
\\
&\leq & \|f\|_{{\rm BMO}}.
\end{eqnarray*}
On the other hand, sinc \mbox{$4O_j\subset  \tilde B$}  (remember Lemma \ref{g}) and   \mbox{$r_{\tilde B}\leq 1$}, it ensues that
\begin{eqnarray*}
I_2&\lesssim&   \frac{1}{|B|}\sum_j |O_j|\big(1+\ln\Big(\frac{1-\ln 2r_j}{ 1-\ln g_\psi(r)  } \Big)\big)
\\
&\lesssim&  \frac{1}{|B|}\sum_j |O_j|(1+\ln\big(\frac{1-\ln r_j}{ 1-\ln g_\psi(r)  } \big)).
\end{eqnarray*}
Thanks to  \eqref{ss}  we get
\begin{eqnarray}
\nonumber
\ln\Big(\frac{1-\ln r_j}{ 1-\ln g_\psi(r)  } \Big)&\leq& \ln\Big(\frac{1-\ln r_j}{ 1-\ln r  } \Big)+\ln\Big(\frac{1-\ln r}{ 1-\ln g_\psi(r)  } \Big)
\\
\label{s}
&\lesssim& 1+  \ln\Big(\frac{1-\ln r_j}{ 1-\ln r  } \Big)+\ln(1+\|\psi\|_{*}).
\end{eqnarray}
Thus it remains to prove that
\begin{eqnarray}
\label{ef}
 II:=\frac{1}{|B|}\sum_j |O_j|(1+\ln\big(\frac{1-\ln r_j} { 1-\ln r  }\big))\lesssim 1+\ln(1+\|\psi\|_{*}).
\end{eqnarray}
For every  \mbox{$k\in\mathbb N$}  we set 
$$
u_k:=\sum_{e^{-(k+1)}r< r_j\leq e^{-k}r}|O_j|,
$$
so that
\begin{eqnarray}
\label{eff}
II\leq \frac{1}{|B|}\sum_{k\geq 0} u_k\big(1+\ln\big(\frac{k+2-\ln r} { 1-\ln r  }\big)\big).
\end{eqnarray}

We need the following lemma.
\begin{Lemm} 
\label{equivalence}
There exists a universal constant  \mbox{$C>0$}  such that
$$
u_k\leq Ce^{-\frac{k}{\|\psi\|_*}}r^{1+\frac{1}{\|\psi\|_*}},
$$
for every  \mbox{$k\in\mathbb N$}.
\end{Lemm}
\begin{proof}[Proof of Lemma \ref{equivalence}]
If we denote by   \mbox{$C\geq 1$}  the implicit constant appearing in Whitney Lemma, then 
$$
u_k\leq |\{ y\in \psi(B): d(y, \psi(B)^c)\leq Ce^{-k}r\}|.
$$
The preservation of  Lebesgue  measure by  \mbox{$\psi$}  yields
$$
 |\{ y\in \psi(B): d(y, \psi(B)^c)\leq Ce^{-k}r\}|=|\{ x\in B: d(\psi(x), \psi(B)^c)\leq Ce^{-k}r\}|,
$$

Since  \mbox{$  \psi(B)^c=\psi(B^c)$}  then
$$
u_k\leq |\{ x\in B: d(\psi(x), \psi(B^c))\leq Ce^{-k}r\}|.
$$
We set
  $$D_k=\{ x\in B: d(\psi(x), \psi(B^c))\leq Ce^{-k}r\}.
  $$
Since  \mbox{$\psi(\partial B)$}  is the frontier of  \mbox{$\psi(B)$}  and  \mbox{$d(\psi(x), \psi(B^c))=d(\psi(x), \partial \psi(B))$}  then
$$
D_k\subset \{ x\in B: \exists y\in \partial B \;{\rm with}\; |\psi(x)- \psi(y)|\leq Ce^{-k}r\}.
$$
The condition on  \mbox{$\delta_0$}  is just to assure that  \mbox{$Cr\leq 1$}  for all  \mbox{$r\leq\delta_0$}.
In this case  Proposition \ref{p1} gives
$$
D_k\subset \{ x\in B: \exists y\in \partial B: |x- y|\leq Ce^{1-\frac{k}{\|\psi\|_*}}r^{\frac{1}{\|\psi\|_*}}\}.
$$
Thus,  \mbox{$D_k$}  is contained in the annulus  \mbox{$\mathcal A=\{ x\in B: d(x,\partial B) \leq Ce^{1-\frac{k}{\|\psi\|_*}}r^{\frac{1}{\|\psi\|_*}}\}$}  and so 
$$
u_k\leq |D_k|\lesssim e^{-\frac{k}{\|\psi\|_*}}r^{1+\frac{1}{\|\psi\|_*}},
$$
as claimed.
\end{proof}

Coming back to \eqref{eff}. Let  \mbox{$N$}  a large integer to be chosen later. We split the sum in the right hand side of \eqref{eff} into two parts
$$
II\lesssim \sum_{k\leq N}(...)+\sum_{k> N}(.....):=II_{1}+II_{2}.
$$
Since  \mbox{$\sum u_k\leq |B|$}  then
\begin{eqnarray}
\label{ff}
II_{1}\leq 1+\ln\Big(\frac{N+2-\ln r} { 1-\ln r  }\Big).
\end{eqnarray}
On the other hand
$$
II_{2}\leq \sum_{k> N}e^{-\frac{k}{\|\psi\|_*}}r^{\frac{1}{\|\psi\|_*}-1}(1+\ln\big(\frac{k+2-\ln r} { 1-\ln r  }\big)).
$$
The parameter  \mbox{$N$}  will be taken bigger than  \mbox{$\|\psi\|_*$}  so that the function in  \mbox{$k$}  inside the sum is decreasing and an easy comparison with integral yields
\begin{eqnarray}
\label{fff}
 II_{2}\lesssim e^{-\frac{N}{\|\psi\|_*}}\|\psi\|_*^2r^{\frac{1}{\|\psi\|_*}-1}\big(1+\ln\Big(\frac{N+2-\ln r} { 1-\ln r  }\Big)\big).
\end{eqnarray}

Putting \eqref{ff} and \eqref{fff} together and taking  \mbox{$N= [\|\psi\|_*(\|\psi\|_*-\ln r)]+1$}  
\begin{eqnarray*}
II\lesssim \big(1+ e^{-\frac{N}{\|\psi\|_*}}\|\psi\|_*^2r^{\frac{1}{\|\psi\|_*}-1}\big)\big(1+\ln\Big(\frac{N+2-\ln r} { 1-\ln r  }\Big)\big).
\end{eqnarray*}
Taking  \mbox{$N= [\|\psi\|_*(\|\psi\|_*-\ln r)]+1$}  
\begin{eqnarray*}
II&\lesssim& (1+ e^{-\|\psi\|_*}\|\psi\|_*^2r^{\frac{1}{\|\psi\|_*}})\big(1+\ln\big(\frac{\|\psi\|_*(\|\psi\|_*-\ln r)+2-\ln r} { 1-\ln r  }\big)\big).
\\
&\lesssim& 1+\ln(1+\|\psi\|_*),
\end{eqnarray*}
where  we have used the fact that  \mbox{$r\leq 1$}  and the obvious inequality 
$$
\frac{\|\psi\|_*(\|\psi\|_*-\ln r)+2-\ln r} { 1-\ln r  }\lesssim (1+\|\psi\|_*)^2.
$$
This ends the proof of \eqref{ef}.

  \mbox{$\bullet$}  {\it Case 2:}  \mbox{$\delta_0\geq r \geq \frac14 e^{-\|\psi\|_*}$.}    In this case 
  $$
 |\ln r|\lesssim \|\psi\|_*.
  $$
 Since  \mbox{$\psi$}  preserves  Lebesgue  measure, we get
\begin{eqnarray*}
I&:=&\av_{B}|f\co\psi- \av_{B}(f\co\psi)|
\\
&\leq &   2   \av_{\psi(B)}|f|.
\end{eqnarray*}
Let \mbox{$\tilde O_j$}  denote the ball which is concentric to  \mbox{$O_j$}  and  whose radius is equal to  \mbox{$1$} (we use the same Whitney covering as above).  Without loss of generality  we can  assume  \mbox{$\delta_0\leq\frac14$}. This guarantees \mbox{$4O_j\subset\tilde O_j$} and yields by definition
\begin{eqnarray*}
I&\lesssim & \frac{1}{|B|}\sum_j |O_j|\av_{2O_j}|f-\av_{\tilde O_j}(f)|+ \frac{1}{|B|}\sum_j |O_j| |\av_{\tilde O_j}(f)|
\\
&\lesssim&   \frac{1}{|B|}\sum_j |O_j|\Big(1+\ln\big({1-\ln 2r_j} \big)\Big)\|f\|_{\lb}+\frac{1}{|B|}\sum_j |O_j| \|f\|_{L^p}
\\
&\lesssim& 
 1+  \frac{1}{|B|}\sum_j |O_j|\big(1+\ln\big({1-\ln r_j} \big)\big).
\end{eqnarray*}
As before one writes
\begin{eqnarray*}
I&\lesssim& \frac{1}{|B|}\sum_{k\geq 0} u_k\big(1+\ln\big(k+2-\ln r\big)\big)
\\
&\lesssim&1+\ln\big(N+2-\ln r\big)+ e^{-\frac{N}{\|\psi\|_*}} \|\psi\|_*^2 r^{\frac{1}{\|\psi\|_*}-1}\big(1+\ln\big(N+2-\ln r)\big).
\end{eqnarray*}
Taking  \mbox{$N=[ \|\psi\|_*(\|\psi\|_*-\ln r)]+1$}  and using the fact that  \mbox{$|\ln r|\lesssim \|\psi\|_*$}  leads to the desired result.

The outcome of this first step of the proof is
$$
\|f{\rm o}\psi\|_{{\rm BMO}\cap L^p}\lesssim\ln(1+\|\psi\|_*)\|f\|_{\lb\cap L^p}.
$$
\subsection*{ Step 2} This step of the proof deals with the second term in the  \mbox{$\lb$}-norm. It is shorter  than the first step  because it makes use of  the arguments   developed  above.
Take   \mbox{$B_2=B(x_2,r_2)$}   and  \mbox{$B_1=B(x_1,r_1)$}  in  \mbox{$\R^2$}   with   \mbox{$r_1\leq 1$}  and  \mbox{$2B_2\subset B_1$}.
There are three cases to consider.

\mbox{$\bullet$}  {\it Case 1:}  \mbox{$   r_1\lesssim e^{-\|\psi\|_*}$}  (so that  \mbox{$g_\psi(r_2)\leq g_\psi(r_1) \leq \frac12$}).

We set  \mbox{$\tilde B_i:= B(\psi(x_i), g_\psi(r_i)), i=1,2$}  and
$$
J:=\frac{|\av_{B_2}(f\co\psi)-\av_{B_1}(f\co\psi)|}{1+ \ln\big(\frac{ 1-\ln r_2 }{1-\ln r_1}\big)}.
$$
Since the denominator is bigger than  \mbox{$1$}  one get
$$
J\leq J_{1}+J_{2}+J_3,
$$
with 
\begin{eqnarray*}
J_{1}&=&  |\av_{\psi(B_2)}(f)-\av_{\tilde B_2}(f)|+ |\av_{\psi(B_1)}(f)-\av_{\tilde B_1}(f)| \\
J_{2}&=&\frac{|\av_{\tilde B_2}(f)-\av_{2\tilde B_1}(f)|}{1+ \ln\Big(\frac{ 1-\ln r_2 }{1-\ln r_1}\Big)}
 \\
J_{3}&=&|\av_{\tilde B_1}(f)-\av_{2\tilde B_1}(f)|.
\end{eqnarray*}
Since  \mbox{$2\tilde B_2\subset 2\tilde B_1$}   and  \mbox{$r_{2\tilde B_1}\leq1$}  then
$$
J_{2}\leq \frac{1+ \ln\big(\frac{ 1-\ln g_\psi(r_2) }{1-\ln(2g_\psi(r_1))}\big)}{1+ \ln\big(\frac{ 1-\ln r_2 }{1-\ln r_1}\big)}\|f\|_{\lb}.
$$
Using similar argument than \eqref{s} (and remembering Remark \ref{sss}) we infer
\begin{eqnarray*}
 \ln\Big(\frac{ 1-\ln g_\psi(r_2)  }{1-\ln (2g_\psi(r_1)) }\Big)
\lesssim 1+\ln(1+\|\psi\|_{*})+\ln\Big(\frac{ 1-\ln r_2  }{1-\ln r_1}\Big).
\end{eqnarray*}
Thus,
$$
J_{2}\lesssim1+\ln(1+\|\psi\|_{*}).
$$
The estimation \eqref{22} yields
$$
J_3\lesssim \|f\|_{{\rm BMO}}.
$$  
The term  \mbox{$J_{1}$}  can be handled exactly as in the analysis of   \mbox{case 1} of   \mbox{step 1}. 

\

\mbox{$\bullet$}  {\it Case 2:} \mbox{$e^{-\|\psi\|_*}\lesssim r_2$}. In this case we write
$$
J\leq \av_{\psi(B_2)}|f|+\av_{\psi(B_1)}|f|.
$$
Both terms can be handled as in the analysis of \mbox{case 2} of the proof of  \mbox{${\rm BMO}$}-part in   \mbox{step 1.} 

\mbox{$\bullet$}  {\it Case 3:}  \mbox{$r_2\lesssim e^{-\|\psi\|_*}$}  and  \mbox{$r_1\gtrsim e^{-\|\psi\|_*}  $}.  Again since the denominator is bigger than  \mbox{$1$}  we get
$$
J\leq \av_{\psi(B_2)}|f- \av_{\tilde B_2}(f)   |+\frac{|\av_{\tilde B_2}(f)|}{1+\ln\big(\frac{ 1-\ln r_2 }{1-\ln r_1}\big)}  +\av_{\psi(B_1)}|f|=J_{1}+J_{2}+J_3.
$$
The terms  \mbox{$J_{1}$}  and  \mbox{$J_3$}  can be controlled as before. The second term is controlled as follows (we make appear the average on  \mbox{$B(\psi(x_2),1)$}  and use Lemma \ref{g} with  \mbox{$\|f\|_{L^p}\leq 1$})
\begin{eqnarray*}
J_{2}&\leq& \frac{1+ \ln(1-\ln r_2)  }{1+ \ln\Big(\frac{ 1-\ln r_2 }{1-\ln r_1} \Big)} 
\\
&\leq& {1+ \ln(1+|\ln r_1|)  }
 \\
&\leq& {1+ \ln(1+\|\psi\|_*)  }.
\end{eqnarray*}
\end{proof}

\section{Proof of Theorem \ref{main}}
The proof falls naturally into three parts.
\subsection{{ A priori} estimates}
The following  estimates follow directly from Proposition \ref{prop} and Theorem \ref{decom}.
 \begin{Prop}
 \label{apriori} Let  \mbox{$u$}  be a smooth solution of \eqref{E} and   \mbox{$\omega$}  its vorticity. Then, there exists a constant  \mbox{$C_0$}  depending only on the norm  \mbox{$L^p\cap \lb$}  of  \mbox{$\omega_0$}  such that
   $$
  \|u(t)\|_{LL}+\|\omega(t)\|_{\lb}\leq C_0\exp{(C_0t)},
   $$
  for every  \mbox{$t\geq 0$}.
 \end{Prop}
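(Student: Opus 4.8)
The plan is to close a Gronwall argument in which the only nonlinear feedback is controlled by the logarithmic composition estimate of Theorem~\ref{decom}. Write $\psi_t:=\psi(t,\cdot)$ for the flow of the smooth divergence-free field $u$; as recalled in the proof of Proposition~\ref{prop}, each $\psi_t$ is a Lebesgue measure preserving homeomorphism, and $\|\psi_t^{-1}\|_*=\|\psi_t\|_*$. Since the vorticity is transported by the flow, $\omega(t)=\omega_0\circ\psi_t^{-1}$, so applying Theorem~\ref{decom} to the measure preserving map $\psi_t^{-1}$ gives
$$
\|\omega(t)\|_{\lb\cap L^p}=\|\omega_0\circ\psi_t^{-1}\|_{\lb\cap L^p}\leq C\ln(1+\|\psi_t\|_*)\,\|\omega_0\|_{\lb\cap L^p}.
$$

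First I would feed in the flow bound from Proposition~\ref{prop}, namely $\|\psi_t\|_*\leq\exp\big(\int_0^t\|u(\tau)\|_{LL}\,d\tau\big)$. Setting $V(t):=\int_0^t\|u(\tau)\|_{LL}\,d\tau$, this turns the logarithm into a \emph{linear} quantity, $\ln(1+\|\psi_t\|_*)\lesssim 1+V(t)$. The remaining ingredient is the classical logarithmic estimate for the Biot--Savart law~\eqref{bs}: since $\lb$ is embedded in ${\rm BMO}$ (Proposition~\ref{pro3}) one has $\|u(t)\|_{LL}\lesssim\|\omega(t)\|_{L^p}+\|\omega(t)\|_{{\rm BMO}}\lesssim\|\omega(t)\|_{\lb\cap L^p}$. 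Combining the three estimates and abbreviating $A:=\|\omega_0\|_{\lb\cap L^p}$ produces the differential inequality
$$
V'(t)=\|u(t)\|_{LL}\leq C A\,(1+V(t)),
$$
which is legitimate because $u$ is smooth and hence $V$ is finite and differentiable on $\R_+$.

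Since $V(0)=0$, Gronwall's lemma yields $1+V(t)\leq\exp(CAt)$, whence $\|u(t)\|_{LL}\leq CA\exp(CAt)$ and, returning to the first display, $\|\omega(t)\|_{\lb}\lesssim A\exp(CAt)$; choosing $C_0$ large in terms of $A$ and the universal constant $C$ gives the stated bound. The hard part—and the single step that carries the whole argument—is the pairing of the logarithm in Theorem~\ref{decom} with the exponential in Proposition~\ref{prop}: it is exactly this cancellation that reduces the a priori uncontrolled growth of $\|\psi_t\|_*$ to a linear dependence on $V(t)$, so that Gronwall closes with merely exponential growth in time. Any composition estimate that were only polynomial in $\|\psi_t\|_*$—such as the one available for ${\rm BMO}$—would leave a superlinear term in $V'(t)$ and fail to close the loop, which is precisely why the space $\lb$ is used here rather than ${\rm BMO}$.
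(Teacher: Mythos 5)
Your proof is correct and follows essentially the same route as the paper: transport of the vorticity by the flow, Theorem~\ref{decom} applied to $\psi_t^{-1}$ combined with Proposition~\ref{prop} to linearize the feedback in $V(t)=\int_0^t\|u(\tau)\|_{LL}\,d\tau$, the classical log-Lipschitz Biot--Savart bound, and Gronwall. The only cosmetic difference is that the paper controls $\|u(t)\|_{LL}$ by $\|\omega(t)\|_{L^2}+\|\omega(t)\|_{B^0_{\infty,\infty}}$ (using conservation of the $L^2$ norm and ${\rm BMO}\hookrightarrow B^0_{\infty,\infty}$) where you use $\|\omega(t)\|_{L^p}+\|\omega(t)\|_{{\rm BMO}}$, which is equivalent by interpolation.
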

 \begin{proof} One has  \mbox{$\omega(t,x) =\omega_0(\psi_t^{-1}(x))$}  where  \mbox{$\psi_t$}  is the flow associated to the velocity  \mbox{$u$}.  Since    \mbox{$u$}  is smooth then   \mbox{$\psi_t^{\pm 1}$}  is Lipschitzian for every  \mbox{$t\geq 0$}. This implies in particular that
  \mbox{$\|\psi_t^{\pm 1}\|_*$}  is finite for every  \mbox{$t\geq 0$}. 
 Theorem \ref{decom} and Proposition \ref{prop} yield together
\begin{eqnarray*}
 \|\omega(t)\|_{\lb}&\leq& C\|\omega_0\|_{\lb\cap L^p}\ln(1+\|\psi_t^{-1}\|_*)
 \\
 &\leq& C\|\omega_0\|_{\lb\cap L^p}\ln(1+\exp(\int_0^t\|u(\tau)\|_{LL}d\tau))
 \\
 &\leq& C_0(1+\int_0^t\|u(\tau)\|_{LL}d\tau).
  \end{eqnarray*}
On the other hand, one has
  \begin{eqnarray*}
 \|u(t)\|_{LL}&\leq&\|\omega(t)\|_{L^2}+ \|\omega(t)\|_{B_{\infty,\infty}^0}
 \\
 &\leq& C(\|\omega_0\|_{L^2}+ \|\omega(t)\|_{BM0}).
\end{eqnarray*}
The first estimate is classical (see \cite{bah-ch-dan} for instance) and the second one is just the conservation of the  \mbox{$L^2$}-norm of the vorticity and the continuity of the embedding  \mbox{${\rm BMO}\hookrightarrow B_{\infty,\infty}^0$}.

Consequently, we deduce that
  \begin{eqnarray*}
 \|u(t)\|_{LL}\leq C_0(1+\int_0^t\|u(\tau)\|_{LL}d\tau), 
\end{eqnarray*}
and by Gronwall's Lemma 
$$
\|u(t)\|_{LL}\leq C_0\exp(C_0t),\qquad\forall\, t\geq 0.
$$
This yields in particular
$$
  \|\omega(t)\|_{\lb}\leq C_0\exp{(C_0t)},\qquad\forall\, t\geq 0,
   $$
  as claimed.

\end{proof}

\subsection{ Existence} Let  \mbox{$\omega_0\in  L^p\cap \lb$} and $u_0=k\ast \omega_0, $
    with $K(x)=\frac{x^\perp}{2\pi|x|^2}.$ We take  \mbox{$\rho\in  \mathcal C^\infty_0$}, with  \mbox{$\rho\geq 0$}  and  \mbox{$\int\rho(x)dx=1$}  and set
$$
\omega_0^n=\rho_n\ast \omega_0,\qquad u_0^n= \rho_n\ast u_0,
$$
 where  \mbox{$\rho_n(x)=n^2\rho(nx)$}. Obviously,  \mbox{$\omega_0^n$}  is   a  \mbox{$C^\infty$}  bounded function for every \mbox{$n\in\mathbb N^*$}. Furthermore, thanks to  \eqref{eq:comp}, 
  $$
 \|\omega_0^n\|_{L^p}\leq \|\omega_0\|_{L^p}\qquad{\rm and}\qquad \|\omega_0^n\|_{\lb}\leq \|\omega_0\|_{\lb}.
  $$
The classical interpolation result between Lebesgue  and \mbox{${\rm BMO}$}  spaces (see \cite{GR} for more details) implies that 
$$
 \|\omega_0^n\|_{L^q}\leq \|\omega_0^n\|_{L^p\cap {\rm BMO}}\leq \|\omega_0\|_{L^p\cap {\rm BMO}} , \qquad \forall\, q\in[p,+\infty[.
$$
Since,  \mbox{$\omega_0^n\in L^p\cap L^\infty$}  then there exists a unique weak solution   \mbox{$u^n$}  with
 $$
\omega_n\in L^\infty(\R_+, L^p\cap L^\infty).
$$
according to the classical result of Yudovich \cite{Y1}.
According to Proposition \ref{apriori} one has
 \begin{eqnarray}
 \label{44}
 \|u^n(t)\|_{LL}+ \|\omega^n(t)\|_{L^p\cap\lb}\leq C_0\exp(C_0t),\qquad\forall\, t\in\R_+.
  \end{eqnarray}
  With this  uniform estimate  in hand, we can perform the same analysis as in the case  \mbox{$\omega_0\in L^p\cap L^\infty$}  (see paragraph 8.2.2 in \cite{Maj} for more explanation). For the convenience of the reader we briefly outline the  main arguments of the proof.
  
  If one denotes by  \mbox{$\psi_n(t,x)$}  the associated flow to  \mbox{$u^n$}  then 
  \begin{equation}
  \label{tt}
   \|\psi_n^{\pm1}(t)\|_{*}\leq C_0\exp(C_0t),\qquad\forall\, t\in\R_+.
  \end{equation}
  This yields the existence of explicit time continuous functions  \mbox{$\beta(t)>0$}  and  \mbox{$C(t)$}   such that
   $$
  |\psi_n^{\pm1}(t,x_2)-\psi_n^{\pm1}(t,x_1)|\leq C(t)|x_2-x_1|^{\beta(t)},\qquad \forall\, (x_1,x_2)\in\R^2\times\R^2.
   $$
  Moreover,
   $$
   |\psi_n^{\pm1}(t_2,x)-\psi_n^{\pm1}(t_1,x)|\leq |t_2-t_1|\|u^n\|_{L^\infty}\leq C_0|t_2-t_1|,\qquad \forall\, (t_1,t_2)\in\R_+\times\R_+.
    $$
Here, we have used the Biot-Savart law to get
$$
   \|u^n(t)\|_{L^\infty}\lesssim \|\omega^n(t)\|_{L^p\cap L^3}\leq\|\omega_0\|_{L^p\cap L^3}.
    $$
   The family  \mbox{$\{\psi_n,\, n\in\mathbb N\}$}  is bounded and equicontinuous on every compact  \mbox{$[0,T]\times \bar B(0,R)\subset \R_+\times\R^2$}. The Arzela-Ascoli
theorem implies
   the existence of a limiting particle trajectories  \mbox{$\psi(t,x)$}. Performing the same analysis for  \mbox{$\{\psi_n^{-1},\, n\in\mathbb N\}$}  we figure out  that  \mbox{$\psi(t,x)$}  is a Lebesgue  measure preserving  homeomorphism . Also, passing to the limit\footnote{ We take the pointwise limit in the definition formula and then take the supremum.} in \eqref{tt} leads to
    $$
    \|\psi_t\|_{*}=\|\psi^{-1}_t\|_{*}\leq C_0\exp(C_0t),\qquad \forall\, t\in\R_+.
    $$
   One defines,
    $$
   \omega(t,x)=\omega_0(\psi^{-1}_t(x)),\qquad u(t,x)=(k\ast_x \omega(t,.))(x).
    $$
We easily check that for every  \mbox{$q\in [p,+\infty[$}  one has
   \begin{eqnarray*}
  \omega^n(.,t)&\longrightarrow& \omega(.,t)\quad {\rm in }\,\, L^q.
  \\
   u^n(.,t)&\longrightarrow_x& u(.,t)\quad {\rm uniformly}. 
  \end{eqnarray*}
The last claim follows from the fact that
   $$
   \|u^n(t)-u(t)\|_{L^\infty}\lesssim \|\omega^n(t)-\omega(t)\|_{ L^p\cap L^3}.
   $$
 All this allows us  to pass to the limit in the integral equation on  \mbox{$\omega^n$}  and then to prove that  \mbox{$(u,\omega)$}  is  a weak solution to the vorticity-stream formulation of the 2D Euler system. Furthermore, the  convergence of 
   \mbox{$\{\omega^n(t)\}$}  in  \mbox{$L^1_{\text{loc}}$}  and \eqref{44} imply together that
   $$
  \|\omega(t)\|_{L^p\cap\lb}\leq C_0\exp(C_0t),\qquad \forall\,t\in\R_+.
   $$
  as claimed.
  
  The continuity of  \mbox{$\psi$}  and the preservation of Lebesgue  measure imply that  \mbox{$t\mapsto \omega(t)$}  is continuous\footnote{ By approximation we are reduced to the following situation:  \mbox{$g_n(x)\to g(x)$}  pointwise  and 
   $$\|g_n\|_{L^q}=\|g\|_{L^q}.
   $$
   This is enough to deduce that  \mbox{$g_n\to g$}  in  \mbox{$L^q$}  (see Theorem 1.9 in \cite{LL}  for instance). } with values in  \mbox{$L^q$}  for all  \mbox{$q\in [p,+\infty[$}. This implies in particular  that
   \mbox{$u\in \mathcal C([0,+\infty[, L^r(\er^d))$}  for every  \mbox{$r\in [\frac{2p}{2-p},+\infty]$}. 
 \subsection{ Uniqueness} Since the vorticity remains bounded in  \mbox{${\rm BMO}$}  space then the uniqueness of the solutions follows from Theorem 7.1 in \cite{Vishik1}. 
 Another way to prove that is to add the information    \mbox{$u\in \mathcal C([0,+\infty, L^\infty(\er^d))$}  (which is satisfied for the solution constructed above) to the theorem and in this case the uniqueness follows from Theorem 7.17 in \cite{bah-ch-dan}.
 
\


\begin{thebibliography}{9999}

\bibitem{bah-ch-dan}  H.~ Bahouri, J-Y.~  Chemin and R.~ Danchin, {\it Fourier Analysis
and Nonlinear Partial Differential Equations}, Grundlehren der mathematischen Wissenschaften 343.

\bibitem{BK}  F.~ Bernicot and S.~ Keraani, {\it  Sharp constants for composition with a measure-preserving map}, preprint 2012.

\bibitem{Beale}
 J. T.~ Beale, T.~ Kato and  A.~ Majda, {\it Remarks on the Breakdown of Smooth Solutions 
for the  \mbox{$3D$}  Euler Equations}, Comm.   Math.   Phys.    {\bf 94} (1984) 61--66.   

\bibitem{Ch} D.~ Chae, {\it  Weak solutions of 2D Euler equations with initial vorticity in LlnL}. J. Diff. Eqs., {\bf 103} (1993), 323--337.

\bibitem{Ch1} J.-Y. Chemin, {\it Fluides Parfaits Incompressibles}, Ast\'erisque 230 (1995); {\it Perfect Incompressible Fluids},
transl. by I. Gallagher and D. Iftimie, Oxford Lecture Series in Mathematics and Its Applications, Vol. {\bf 14},
Clarendon Press-Oxford University Press, New York (1998).


\bibitem{Maj} A. J.~ Majda and A. L.~ Bertozzi,  {\it  Vorticity and incompressible flow}, Cambridge Texts
in Applied Mathematics, vol. {\bf 27}, Cambridge University Press, Cambridge, 2002.

\bibitem{De} J.-M.~ Delort, {\it  Existence de nappes de tourbillon en dimension deux}, J. Amer. Math. Sot., Vol. {\bf 4} (1991)
pp. 553--586. 

\bibitem{DM}  R.~ DiPerna and A.~ Madja, {\it Concentrations in regularization for 2D incompressible flow}. Comm. Pure Appl.
Math. {\bf 40} (1987), 301--345.

\bibitem{FLX} M. C. Lopes Filho, H. J. Nussenzveig Lopes and Z. Xin, {\it Existence of
vortex sheets with reflection symmetry in two space dimensions}, Arch. Ration. Mech. Anal., {\bf 158}(3) (2001), 235--257.
 
\bibitem{Ger} P. G\'erard, {\it R\'esultats r\'ecents sur les fluides parfaits incompressibles bidimensionnels [d'apr\`es J.-Y.
Chemin et J.-M. Delort]}, S\'eminire, Bourbaki, 1991/1992, no. 757, Ast\'erisque, Vol. {\bf 206}, 1992,411--444.

\bibitem{GMO} Y.~ Giga, T. ~ Miyakawa and H.~ Osada, {\it  \mbox{$2D$}  Navier-Stokes flow with measures as initial vorticity}. Arch. Rat.
Mech. Anal. {\bf 104} (1988), 223--250.

\bibitem{GR} L.~ Grafakos, {\it  Classical and Modern Fourier Analysis}, Prentice Hall, New-York, 2006.

\bibitem{LL} E. H. Lieb and M. Loss, { \it Analysis}, Grad. Studies in Math. {\bf 14}, Amer. Math. Soc., Providence,
RI, 1997.
\bibitem{lions1} P.-L.~ Lions, {\it Mathematical topics in fluid mechanics. Vol. 1}. The Clarendon Press Oxford University Press, New York, 1996.

\bibitem{Ser} P.~ Serfati, {\it Structures holomorphes \`a faible r\'egularit\'e spatiale en m\'ecanique des fluides}. J. Math. Pures Appl. {\bf 74} (1995), 95--104.

\bibitem{tan}Y.~ Taniuchi, {\it Uniformly local Lp Estimate for 2D Vorticity Equation and Its Application to Euler Equations with Initial Vorticity in {\rm BMO}}. Comm. Math Phys., {\bf 248} (2004), 169--186.

\bibitem{Vishik1} M.~ Vishik, {\it Incompressible flows of an ideal fluid with vorticity in borderline spaces of Besov type}. (English, French summary) 
Ann. Sci. \'Ecole Norm. Sup. (4) {\bf 32} (1999), no. 6, 769--812. 

\bibitem{Vishik2} M. Vishik, {\it Hydrodynamics in Besov Spaces}, Arch. Rational Mech. Anal. {\bf{145}} (1998), 197--214.  

\bibitem{Y1} Y.~ Yudovich, {\it Nonstationary flow of an ideal incompressible liquid}. Zh. Vych. Mat., {\bf 3} (1963), 1032--1066.

\bibitem{Y2} Y.~ Yudovich, {\it Uniqueness theorem for the basic nonstationary problem in the dynamics of an ideal incompressible fluid}. Math. Res. Lett., {\bf 2} (1995), 27--38.

\end{thebibliography}
\end{document}